\documentclass[12pt]{amsart}
%\documentclass{article}
%%%%%%%%%%%%%%%%%%%%%%%%%%%%%%%%%%%%%%%%%%%%%%%%%%%%%%%%%%%%%%%%%%%%%%%%%%%%%%%%%%%%%%%%%%%%%%%%%%%%%%%%%%%%%%%%%%%%%%%%%%%%%%%%%%%%%%%%%%%%%%%%%%%%%%%%%%%%%%%%%%%%%%%%%%%%%%%%%%%%%%%%%%%%%%%%%%%%%%%%%%%%%%%%%%%%%%%%%%%%%%%%%%%%%%%%%%%%%%%%%%%%%%%%%%%%
\usepackage{amsmath, amsthm, amssymb}
\usepackage[headings]{fullpage}
\usepackage[pdfstartview={FitH}]{hyperref}

\usepackage{color}

 \numberwithin{equation}{section}
 
 \theoremstyle{plain}
 \newtheorem{theorem}[equation]{Theorem}
 \newtheorem{corollary}[equation]{Corollary}
 \newtheorem{lemma}[equation]{Lemma}
 
 \newtheorem{proposition}[equation]{Proposition}
 
 \newtheorem*{cohenthm}{Cohen's Theorem}
 
 \theoremstyle{definition}
 \newtheorem{definition}[equation]{Definition}
 
 \newtheorem{example}[equation]{Example}
  
 \newtheorem{remark}[equation]{Remark}

\DeclareMathOperator{\ann}{ann}
\DeclareMathOperator{\Spec}{Spec}
\DeclareMathOperator{\Max}{Max}
 
\newcommand{\F}{\mathcal{F}}
\newcommand{\C}{\mathcal{C}}
\newcommand{\setS}{\mathcal{S}}
\newcommand{\Z}{\mathbb{Z}}

\begin{document}

\title{A Prime Ideal Principle for two-sided ideals}
\author{Manuel L. Reyes}
\address{Department of Mathematics\\
Bowdoin College\\
8600 College Station\\
Brunswick, ME 04011--8486, USA}
\email{reyes@bowdoin.edu}
\urladdr{http://www.bowdoin.edu/~reyes/}

\thanks{This material is based upon work supported by the National Science Foundation 
under Grant No.\ DMS-1407152.}

\keywords{maximal implies prime, Prime Ideal Principle, Oka family of ideals}
\subjclass[2010]{
Primary: 
16D25, % Ideals 
16N60; % Prime and semiprime rings 
Secondary: 
13A15, % Ideals; multiplicative ideal theory 
16D20% Bimodules 
}
\date{June 30, 2016}

\begin{abstract}
Many classical ring-theoretic results state that an ideal that is maximal with respect to satisfying
a special property must be prime. We present a ``Prime Ideal Principle'' that gives a uniform
method of proving such facts, generalizing the Prime Ideal Principle for commutative rings
due to T.\,Y.~Lam and the author.
Old and new ``maximal implies prime'' results are presented, with results touching on
annihilator ideals, polynomial identity rings, the Artin-Rees property, Dedekind-finite rings,
principal ideals generated by normal elements, strongly noetherian algebras, and just
infinite algebras.
\end{abstract}

\maketitle

\section{Introduction}

A well-known phenomenon in ring theory is that, for certain properties $\mathcal{P}$, \emph{ideals 
of a ring that are maximal with respect to satisfying $\mathcal{P}$ are prime.} This is probably most 
familiar in the setting of commutative algebra, as in~\cite[pp.~70--71, 84--85]{Eisenbud}. For 
instance, an ideal in any  commutative ring $R$ is prime if it is maximal with respect to any of the 
following properties:
\begin{itemize}
\item being a proper subset of $R$,
\item not being finitely generated,
\item not being principal, or
\item having empty intersection with a given multiplicative submonoid $S \subseteq R$.
\end{itemize}
The \emph{Prime Ideal Principle} of~\cite{LR} unifies the above results along with many other classical
``maximal implies prime'' results in commutative algebra. The basic idea behind this
unification was to find certain properties of a family $\F$ of ideals in a fixed commutative ring that 
guarantee that an ideal maximal with respect to not lying in $\F$ is prime. 
The most notable such condition is that the family $\F$ be an \emph{Oka family}; this means that,
for any ideal $I \unlhd R$ and any element $a \in R$, the following implication holds:
\[
(I,a),(I:a) \in \F \implies I \in \F.
\]
The Prime Ideal Principle then states that, for any Oka family $\F$ in a commutative ring $R$, 
an ideal $I \unlhd R$ maximal with respect to $I \notin \F$ is prime. All of the examples above
were recovered from the Prime Ideal Principle in~\cite{LR} by demonstrating that a relevant family 
of ideals is Oka. 

A version of the Prime Ideal Principle for right ideals in noncommutative rings was established
in~\cite{Reyes}. This was achieved by identifying an appropriate type of ``prime right
ideal'' (these were called \emph{completely prime right ideals}) and by finding a suitable definition
of Oka families of right ideals. 
However, to date there is no version of the Prime Ideal Principle for two-sided ideals in noncommutative
rings.

The purpose of this paper is to present such a theory for two-sided ideals in noncommutative 
rings, unifying many of the classical noncommutative ``maximal implies prime'' results and
presenting some new ones.
In Section~\ref{PIP section} we briefly introduce Oka families of (two-sided) ideals in noncommutative
rings and prove the corresponding Prime Ideal Principle: if $\F$ is an Oka family of ideals in a
noncommutative ring $R$ and $I$ is an ideal that is maximal with respect to $I \notin \F$, then
$I$ is prime.
We also introduce stronger conditions on families of ideals called $(P_1)$, $(P_2)$, and $(P_3)$
that are noncommutative analogues  of those defined in~\cite{LR}. 

The heart of the paper lies in the remaining sections, which present many examples of families 
satisfying these different properties with corresponding applications of the Prime Ideal Principle. 
Section~\ref{P_i family section} gives examples of families satisfying the properties 
$(P_1)$, $(P_2)$, and $(P_3)$. Section~\ref{Oka family section} considers families that satisfy 
the weaker $r$-Oka and Oka properties, many of which are constructed from various classes
of (bi)modules; it concludes with a discussion of problems for future work.

\textbf{Acknowledgments.} I wish to thank George Bergman, T.\,Y.~Lam, and Lance Small 
for helpful conversations, suggestions, and references to the literature. I also thank the
referee for offering several clarifying remarks.

\textbf{Conventions.} Throughout this paper, all rings, modules, and ring homomorphisms are
assumed to be unital.
Let $R$ be a ring. We write $I \unlhd R$ to denote that $I$ is an ideal of $R$.
If $X$ is a subset of $R$, we write $(X)$ for the ideal of $R$ generated by $X$;
in case $X = \{x\}$ is a singleton we simply write $(x) = (\{x\})$.
Furthermore, if $J \unlhd R$ and $a \in R$, then we denote $(I,J) = (I \cup J) = I+J$ and 
$(I,a) = (I \cup \{a\}) = I+(a)$.
The notation $M_R$ and ${}_R M$ respectively indicate that $M$ is a right or left $R$-module.

\section{The Prime Ideal Principle}
\label{PIP section}

The families of ideals defined below illustrate one possible way to
generalize those ideal families defined in~\cite{LR} from commutative rings to
noncommutative rings. For two ideals $I,J\unlhd R$ we denote $J^{-1}I = 
\{x \in R : Jx \subseteq I\} $ and $IJ^{-1} = \{ x\in R : xJ \subseteq I\}$, 
both of which are clearly ideals in $R$.

\begin{definition}
\label{def:Oka}
Let $R$ be a ring and let $\F$ be a family of ideals of $R$ with $R\in \F$. We say that
$\F$ is an \emph{Oka family of ideals} if, for any $I \unlhd R$ and $a \in R$, 
\begin{equation}
\label{Oka property}
(I,a),\ (a)^{-1}I,\ I(a)^{-1} \in \F \implies I \in \F.
\end{equation}
We say that $\F$ is an \emph{$r$-Oka family of ideals} if, for any
$I \unlhd R$ and $a \in R$, the condition $(I,a),(a)^{-1}I \in \F$ implies that
$I \in \F$.
Furthermore, we define the following properties of the family $\F$ (recall
the standing assumption that $R \in \F$), where $I, J \unlhd R$ are arbitrary ideals:
\begin{itemize}
\item \emph{Monoidal}: $I,J \in \F \implies IJ \in \F$;
\item \emph{Semifilter}: $I \in \F$ and $J \supseteq I \implies J \in \F$;
\item $(P_1)$: $\F$ is a monoidal semifilter;
\item $(P_2)$: $\F$ is monoidal, and for any $J \in \F$, $I^{2}\subseteq J\subseteq I \implies I\in \F$;
\item $(P_3)$: For $A,B \in \F$ and $I \unlhd R$, $AB \subseteq I \subseteq A \cap B \implies I \in \F$;
\item \emph{Strongly $r$-Oka}: $I+J,J^{-1}I \in \F \implies I \in \F$;
\item \emph{Strongly Oka}: $I+J,J^{-1}I,IJ^{-1} \in \F \implies I \in \F$.
\end{itemize}
\end{definition}

The $r$-Oka and strongly $r$-Oka familes defined above have obvious left-handed analogues,
which we refer to as (strongly) $\ell$-Oka families.
An alternative term would have been ``(strongly) right- and left-Oka'' families. However, we have
avoided this terminology since it would have suggested that an Oka family is the same as a family
that is both right and left Oka.
It seems likely that an Oka family need not be either $r$-Oka nor $\ell$-Oka in full generality (such 
as the families produced via Proposition~\ref{Tensor category}), but we do not have an explicit
example to verify this.

Each of the properties of ideal families listed above is easily seen to
pass to arbitrary intersections of such families. Thus for each of these
properties, the set of all ideal families in $R$ with that property forms
a complete sublattice of the lattice of \emph{all} families of ideals in $R$.

The following result outlines the logical dependence between the many types
of families defined above.

\begin{proposition}
\label{logical dependence}
For a family $\F$ of ideals in a ring $R$, we have the following
implications of the properties defined above:
\begin{equation*}
\begin{array}{cccc}
(P_1) \implies (P_2) \implies (P_3) \implies & \text{strongly $r$-Oka} & \implies & \text{$r$-Oka} \\ 
& \Downarrow &  & \Downarrow \\ 
& \text{strongly Oka} & \implies & \text{Oka}
\end{array}
\end{equation*}
\end{proposition}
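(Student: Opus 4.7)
The plan is to verify each arrow in the diagram; all but one should fall out directly from unpacking definitions, so I would focus my effort on the single step that requires an actual construction, namely $(P_3) \Rightarrow$ strongly $r$-Oka.

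I would begin with the easy chain $(P_1) \Rightarrow (P_2) \Rightarrow (P_3)$. For $(P_1) \Rightarrow (P_2)$, the semifilter condition immediately supplies the ``$I^{2} \subseteq J \subseteq I$ with $J \in \F$ implies $I \in \F$'' clause, since $J \subseteq I$ already forces $I \in \F$; monoidality is common to both hypotheses. For $(P_2) \Rightarrow (P_3)$, I would take $A,B \in \F$ with $AB \subseteq I \subseteq A \cap B$ and set $J := AB$, which lies in $\F$ by monoidality. Any product $xy$ with $x,y \in A \cap B$ satisfies $x \in A$ and $y \in B$, so $(A \cap B)^{2} \subseteq AB$; hence $I^{2} \subseteq J \subseteq I$, and $(P_2)$ finishes.

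The crux is $(P_3) \Rightarrow$ strongly $r$-Oka. Given $I + J,\ J^{-1}I \in \F$, I would try the natural choice $A := I + J$ and $B := J^{-1}I$. The inclusion $I \subseteq A \cap B$ splits into $I \subseteq I + J$ (trivial) and $I \subseteq J^{-1}I$, the latter because $I$ is two-sided and therefore absorbs $J$ on the left, giving $JI \subseteq I$. The inclusion $AB \subseteq I$ expands as $(I + J)(J^{-1}I) = I \cdot J^{-1}I + J \cdot J^{-1}I \subseteq I$, where the first summand is absorbed by $I$ (being a right ideal) and the second by the very definition of $J^{-1}I$. Property $(P_3)$ then yields $I \in \F$. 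This is also the one place where two-sidedness of $I$ is genuinely used, so I expect it to be the main technical moment of the proposition.

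The remaining four implications are hypothesis weakenings. The vertical arrows strongly $r$-Oka $\Rightarrow$ strongly Oka and $r$-Oka $\Rightarrow$ Oka are immediate, since each Oka-type conclusion is being drawn from strictly more hypotheses (the extra datum $IJ^{-1} \in \F$ or $I(a)^{-1} \in \F$ can simply be ignored). The horizontal arrows strongly $r$-Oka $\Rightarrow$ $r$-Oka and strongly Oka $\Rightarrow$ Oka come from specializing $J$ to a principal ideal $(a)$, under which $I + J$ becomes $(I,a)$, $J^{-1}I$ becomes $(a)^{-1}I$, and $IJ^{-1}$ becomes $I(a)^{-1}$.
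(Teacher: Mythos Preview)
Your proof is correct and supplies exactly the details that the paper omits: the paper simply cites \cite[Thm.~2.7]{LR} for the horizontal chain and declares the vertical arrows obvious, while you have written out the standard arguments explicitly (including the key step $(P_3) \Rightarrow$ strongly $r$-Oka via $A = I+J$, $B = J^{-1}I$). Your treatment is thus a faithful, self-contained version of the argument the paper defers to the reference.
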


\begin{proof}
The horizontal implications are proved in the commutative case in~\cite[Thm.~2.7]{LR}, and
those arguments apply here with virtually no adjustment. The vertical
implications are obvious.
\end{proof}

We mention briefly that there are various equivalent characterizations of the $(P_2)$ and
$(P_3)$ conditions given in~\cite[\S 2]{LR} and~\cite[\S 2]{LR2}. Though the statements and
proofs are given for commutative rings, they once again transfer easily to the
noncommutative case. Note that because ($P_2$) $\Rightarrow$ ($P_3$) and because any ($P_3$) 
family is monoidal, a family satisfying any property ($P_i$) is monoidal.

Before proving the Prime Ideal Principle in this setting, we establish a variant of the
usual criterion for a ring to be prime.
If $R$ is a prime ring and $a,b\in R \setminus \{0\}$ then one can conclude that both
$aRb\neq 0$ and $bRa\neq 0$. There is an inherent symmetry in the definition
of a prime ring in the sense that we can interchange the roles of $a$ and $b$.
What if $R$ fails to be prime? We know offhand that there exist
$a,b\in R\setminus \{0\}$ such that $aRb=0$. But if $R$
is not commutative, we cannot necessarily conclude that $bRa=0$. At first
glance, the symmetry seems to disappear in the \emph{failure} of a ring to
be prime. The following lemma rectifies this situation.

\begin{lemma}
\label{prime ring characterization}
For any ring $R\neq 0$, the following are equivalent:
\begin{itemize}
\item[\rm (1)] $R$ is a prime ring
\item[\rm (2)] For $a,b\in R\setminus\{ 0\}$, \emph{both} $aRb\neq 0$ and $bRa\neq 0$
\item[\rm (3)] For $a,b\in R\setminus\{ 0\}$, \emph{either} $aRb\neq 0$ or $bRa\neq 0$.
\end{itemize}
In particular if $R$ is not a prime ring, then there exist $0\neq a,b\in R$
such that $aRb=0=bRa$.
\end{lemma}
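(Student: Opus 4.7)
My plan is to establish the cyclic chain of implications (1) $\Rightarrow$ (2) $\Rightarrow$ (3) $\Rightarrow$ (1); the ``in particular'' clause is just the contrapositive of the last implication. The first step is immediate from the standard elementwise description of a prime ring (the zero ideal is prime exactly when $aRb \neq 0$ whenever $a$ and $b$ are both nonzero): applied as stated this gives $aRb \neq 0$, and applied with the roles of $a$ and $b$ exchanged it gives $bRa \neq 0$. The step (2) $\Rightarrow$ (3) is trivial.

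For (3) $\Rightarrow$ (1) I would argue the contrapositive, producing from a non-prime ring $R$ nonzero elements $a, b$ with $aRb = 0 = bRa$. Since $R$ is not prime, I start by choosing nonzero two-sided ideals $I, J \unlhd R$ with $IJ = 0$; arbitrary elements of $I$ and $J$ only obviously satisfy $aRb = 0$, so the work lies in arranging $bRa = 0$ simultaneously.

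I would then split into two cases according to whether $I \cap J$ is zero. If $I \cap J \neq 0$, pick any nonzero $a \in I \cap J$ and set $b = a$; the containment $Ra \subseteq J$ together with $a \in I$ gives $aRa \subseteq IJ = 0$, supplying both annihilation conditions at once. If $I \cap J = 0$, pick nonzero $a \in I$ and nonzero $b \in J$; then $aRb \subseteq IJ = 0$, while $bRa \subseteq JI \subseteq I \cap J = 0$ because $JI$ lies in both $I$ and $J$. The only conceptual hurdle is recognizing that one should first try to place $a$ and $b$ inside $I \cap J$ (even allowing $a = b$) before separating them; once that case is handled, the disjoint case is immediate from $JI \subseteq I \cap J$.
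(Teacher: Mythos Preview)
Your argument is correct. The chain $(1)\Rightarrow(2)\Rightarrow(3)$ is handled exactly as in the paper. For $(3)\Rightarrow(1)$ you take a genuinely different (though closely related) route. The paper works entirely at the element level: starting from nonzero $x,y$ with $xRy=0$, either $yRx=0$ (done), or else one picks $r$ with $yrx\neq 0$ and sets $a=b=yrx$, whence $aRb=(yrx)R(yrx)\subseteq yR(xRy)Rx=0$. You instead pass to ideals $I,J$ with $IJ=0$ and split on whether $I\cap J$ vanishes, the key observation being $JI\subseteq I\cap J$. The two arguments translate into one another under $I=(x)$, $J=(y)$: your case $I\cap J\neq 0$ corresponds to the paper's case $yRx\neq 0$ (any nonzero $yrx$ lies in $I\cap J$), and your case $I\cap J=0$ forces $yRx\subseteq JI=0$. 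What your version buys is a cleaner conceptual picture via ideals; what the paper's buys is brevity and the avoidance of any case split when $yRx\neq 0$ (a single element $yrx$ does the job immediately).
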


\begin{proof}
The implications $(1) \Rightarrow  (2) \Rightarrow (3)$ are immediate. To prove 
$(3) \Rightarrow (1)$ we will verify the contrapositive.
Suppose that $R$ is not a prime ring; we will produce $a, b \in R \setminus \{0\}$
such that $aRb = bRa = 0$.
Because the ring is not prime, there exist $x,y\in R\setminus \{0\}$
such that $xRy=0$. If $yRx=0$ then we are finished.
Otherwise there exists $r \in R$ such that $yrx \neq 0$. Setting $a = b = yrx$
yields $aRb = bRa = 0$.
\end{proof}

We arrive at the following noncommutative version of the Prime Ideal Principle
for two-sided ideals in noncommutative rings.
Given a family $\F$ of ideals in a ring $R$, its \emph{complement} is 
$\F' = \{I \unlhd R : I \notin \F\}$. We use $\Max(\F')$ to denote the set of ideals 
that are maximal in the complement of $\F$. Also, we let $\Spec(R)$ denote the
set of prime ideals of $R$.

\begin{theorem}[Prime Ideal Principle]
\label{PIP}
For any Oka family $\F$ of ideals in a ring $R$, we have $\Max(\F') \subseteq \Spec(R)$.
\end{theorem}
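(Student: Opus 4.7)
The plan is a proof by contradiction patterned on the commutative Prime Ideal Principle of \cite{LR}, adapted to the fact that the Oka hypothesis in Definition \ref{def:Oka} involves \emph{three} auxiliary ideals rather than two. Let $I \in \Max(\F')$; since $R \in \F$ by the standing assumption in Definition \ref{def:Oka}, $I$ must be a proper ideal. Suppose toward a contradiction that $I$ is not prime. The aim is to produce an element $a \in R$ for which each of $(I,a)$, $(a)^{-1}I$, and $I(a)^{-1}$ strictly contains $I$. Maximality of $I$ in $\F'$ will then force all three into $\F$, whereupon the Oka implication \eqref{Oka property} gives $I \in \F$, the desired contradiction.

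The key step, and the one I expect to be the main obstacle, is extracting such an $a$ together with a companion $b$ that simultaneously witnesses the strict inclusions in both of the colon ideals. I would pass to the quotient $R/I$, which is nonzero (since $I \subsetneq R$) and not prime. This is precisely the situation handled by Lemma \ref{prime ring characterization}: applied to $R/I$ it supplies nonzero images $\bar a, \bar b \in R/I$ with \emph{both} $\bar a (R/I) \bar b = 0$ and $\bar b (R/I) \bar a = 0$. Lifting, there exist $a, b \in R \setminus I$ with $aRb \subseteq I$ \emph{and} $bRa \subseteq I$. The symmetric conclusion of the lemma is exactly what makes the argument go through; a merely one-sided relation would leave one of the two colon ideals possibly equal to $I$, and the Oka hypothesis could not be invoked.

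To close the argument, I would use that in the unital ring $R$ one has $(a) = RaR$, so that $(a)^{-1}I = \{x \in R : RaRx \subseteq I\}$ and $I(a)^{-1} = \{x \in R : xRaR \subseteq I\}$, both of which automatically contain $I$. Then $b \in (a)^{-1}I$ because $RaRb = R(aRb) \subseteq RI \subseteq I$, and symmetrically $b \in I(a)^{-1}$ because $bRaR = (bRa)R \subseteq IR \subseteq I$. Since $b \notin I$, both colon ideals strictly contain $I$; and $(I,a)$ strictly contains $I$ because $a \notin I$. Maximality of $I$ in $\F'$ therefore places all three of $(I,a)$, $(a)^{-1}I$, and $I(a)^{-1}$ into $\F$, and \eqref{Oka property} delivers $I \in \F$, contradicting $I \in \F'$. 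Aside from the invocation of Lemma \ref{prime ring characterization}, the remaining computations are entirely routine.
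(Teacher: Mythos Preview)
Your proposal is correct and follows essentially the same approach as the paper: both arguments reduce to Lemma~\ref{prime ring characterization} (applied to $R/I$) to obtain $a,b \in R \setminus I$ with $aRb \subseteq I$ and $bRa \subseteq I$, and then use maximality of $I$ in $\F'$ together with the Oka property to reach a contradiction. Your write-up merely spells out a few routine verifications (such as $RaRb \subseteq I$) that the paper leaves implicit.
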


\begin{proof}
Let $I \in \Max(\F')$, and assume for contradiction that $I$ is not prime.
By Lemma~\ref{prime ring characterization} there exist $a,b \in R \setminus I$
such that $aRb,bRa \subseteq I$. Then $a \in (I,a) \setminus I$ and
$b \in (a)^{-1}I \cap I(a)^{-1} \setminus I$. Maximality of $I$ implies that
the ideals $(I,a)$, $(a)^{-1}I$, and $I(a)^{-1}$ lie in $\F$. Because $\F$ is Oka,
this means that $I \in \F$, a contradiction.
\end{proof}

The \emph{Prime Ideal Principle supplement} of~\cite{LR} was a streamlined way
to apply the Prime Ideal Principle in order to deduce that certain ideal-theoretic properties
can be ``tested'' on the set of primes. We also have a noncommutative analogue of
this result.

\begin{theorem}%[Prime Ideal Principle supplement]
\label{PIP supplement}
Let $\F$ be an Oka family such that every nonempty chain of ideals in
$\F'$ (with respect to inclusion) has an upper bound in $\F'$ (this holds,
for example, if every ideal in $\F$ is finitely generated).
\begin{itemize}
\item[\textnormal (1)] Let $\F_0$ be a semifilter of ideals in $R$. If
every prime ideal in $\F_0$ belongs to $\F$, then $\F_0 \subseteq \F$.
\item[\textnormal (2)] For an ideal $J \unlhd R$, if all prime ideals
containing $J$ (resp.\ properly containing $J$) belong to $\F$, then all
ideals containing $J$ (resp.\ properly containing $J$) belong to $\F$.
\item[\textnormal (3)] If all prime ideals of $R$ belong to $\F$, then
all ideals belong to $\F$.
\end{itemize}
\end{theorem}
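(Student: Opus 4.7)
The plan is to derive part~(1) directly from the Prime Ideal Principle (Theorem~\ref{PIP}) via a Zorn's lemma argument, and then to obtain (2) and (3) by instantiating (1) with appropriate semifilters $\F_0$.

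For~(1), I would argue by contradiction. Suppose $\F_0 \not\subseteq \F$, so that $\F_0 \cap \F'$ is nonempty. The key technical observation is that $\F_0 \cap \F'$ is closed under unions of nonempty chains: given a chain $\{I_\alpha\}$ in $\F_0 \cap \F'$, the chain hypothesis on $\F'$ furnishes an upper bound $I \in \F'$; since $I$ contains some (indeed every) $I_\alpha \in \F_0$, the semifilter property of $\F_0$ forces $I \in \F_0$, whence $I \in \F_0 \cap \F'$. Zorn's lemma then produces a maximal element $I_0 \in \F_0 \cap \F'$. Next I would upgrade this to maximality in all of $\F'$: any ideal $J \supsetneq I_0$ with $J \in \F'$ would, by the semifilter property of $\F_0$, also lie in $\F_0$, contradicting the maximality of $I_0$ in $\F_0 \cap \F'$. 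Hence $I_0 \in \Max(\F')$, and Theorem~\ref{PIP} yields $I_0 \in \Spec(R)$. But then the hypothesis on $\F_0$ gives $I_0 \in \F$, contradicting $I_0 \in \F'$.

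For~(2), I would apply~(1) to the semifilter $\F_0 = \{I \unlhd R : I \supseteq J\}$ (respectively $\{I \unlhd R : I \supsetneq J\}$); these are obviously semifilters, and the hypothesis of~(2) is precisely that every prime ideal in $\F_0$ belongs to $\F$. Part~(3) then follows from~(2) on taking $J = 0$ (equivalently, by applying~(1) to the semifilter of all ideals of $R$).

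The only subtle technical point is ensuring that the upper bound produced by the chain hypothesis remains in $\F_0 \cap \F'$ rather than merely in $\F'$; this is exactly where the semifilter hypothesis on $\F_0$ is invoked. For completeness one can also verify the parenthetical sufficient condition: if every member of $\F$ is finitely generated, then the union of any chain in $\F'$ lies in $\F'$, since otherwise it would be finitely generated and hence equal to some member of the chain, contradicting that the chain lies in $\F'$.
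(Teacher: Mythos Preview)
Your proof is correct and follows essentially the same approach as the paper: both reduce (2) and (3) to (1), and both prove (1) by starting from an element of $\F_0 \cap \F'$, invoking Zorn's lemma via the chain hypothesis, and applying the Prime Ideal Principle to the resulting maximal element. The only cosmetic difference is that the paper applies Zorn directly in $\F'$ above a chosen $I_0 \in \F_0 \cap \F'$ and then uses the semifilter property once to place the maximal element in $\F_0$, whereas you apply Zorn inside $\F_0 \cap \F'$ and then separately verify maximality in $\F'$; these are equivalent maneuvers.
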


\begin{proof}
Both (3) and (2) are special cases of (1). We prove (1) by a contrapositive
argument. Let $\F_0$ be a semifilter such that $\F_0 \nsubseteq \F$; we will
construct a prime ideal in $\F_0 \setminus \F$.
Let $I_0 \in \F_0 \setminus \F = \F_0 \cap \F'$. The assumption on chains in
$\F'$ allows us to apply Zorn's lemma and find $I \in \Max(\F')$ such that
$I_0 \subseteq I$. The fact that $\F_0$ is a semifilter gives $I \in \F_0$.
Because $I$ is maximal in the complement of $\F$, the Prime Ideal Principle~\ref{PIP}
implies that $I$ is prime. Thus $I$ is a prime ideal in $\F_0 \setminus \F$ as
desired.
\end{proof}

\begin{remark}
Another property of ideal families in commutative rings that was introduced in~\cite{LR} 
was that of an \emph{Ako family}; a family $\F$ in a commutative ring $R$ has the Ako
property if $(I,a), (I,b) \in \F$ implies $(I,ab) \in \F$ for any $a,b \in R$.
One can readily produce noncommutative versions of this property that satisfies the
Prime Ideal Principle. For instance, K.~Goodearl proposed the following noncommutative
version of the Ako property: 
given $I \unlhd R$ and $a, b \in R$,
\[
(I,a),(I,b) \in \F \implies (I,arb) \in \F \text{ for some } r \in R.
\]
Another possibility would be $(I,a),(I,b) \in \F \implies I+(a)(b) \in \F$.
One can readily verify that a family $\F$ satisfying either of these properties 
will also have $\Max(\F') \subseteq \Spec(R)$.
Since the Ako property has seemed to be less useful than the Oka property 
already in the commutative setting, we do not focus our current efforts on this property.
\end{remark}

Before proceeding to examples of Oka families, we conclude this section with a
discussion of a prominent family of ideals that does not generally form an Oka family.
One might wonder why a Prime Ideal Principle was given for one-sided ideals before it was
established in the more classical setting of two-sided ideals. We will give a few examples to show
that, in a sense, the case of two-sided ideals is more difficult to handle.
Specifically, we recall the following classical result of Cohen~\cite[Thm.~2]{Cohen}.

\begin{cohenthm}
An ideal in a commutative ring that is maximal with respect to not being finitely generated is
prime. Consequently, if every prime ideal of a commutative ring is finitely generated, then that
ring is noetherian.
\end{cohenthm}

This result provided strong guidance for the creation of the Prime Ideal Principle 
in~\cite{LR} and its noncommutative analogue in~\cite{Reyes}. The behavior of the family of 
finitely generated directly motivated the definition of an Oka family of ideals in a commutative 
ring and for the definition of a right Oka family in a noncommutative ring.

In~\cite[Thm.~3.8]{Reyes} we provided a version of Cohen's result above for right ideals of
a noncommutative ring. Specifically, after fixing an appropriate notion of an \emph{Oka family of
right ideals}, we proved that the family of finitely generated right ideals of any ring is an Oka family.
As a consequence we showed that a right ideal of a ring that is maximal with respect to not being
finitely generated is a completely prime right ideal. 
(Further Cohen-type results for right ideals in noncommutative rings were investigated 
in~\cite{Reyes2}.)

A fundamental difference in the case of two-sided ideals of a noncommutative ring is that,
if $R$ is not a commutative ring, then \emph{an ideal of $R$ that is maximal with respect to 
not being finitely generated need not be prime.}
This is illustrated in the following examples. 

Let $A$ and $B$ be simple rings, and let $M$ be an $(A,B)$-bimodule. The triangular matrix
ring
\[
R = T(A,M,B) = 
\begin{pmatrix}
A & M \\
0 & B
\end{pmatrix}
\]
will have exactly two prime ideals:
\begin{align*}
P_1 &= 
\begin{pmatrix}
A & M \\
0 & 0
\end{pmatrix}
= 
R
\begin{pmatrix}
1 & 0 \\
0 & 0
\end{pmatrix}
R \\
&=
\begin{pmatrix}
0 & M \\
0 & 0
\end{pmatrix}
+ R
\begin{pmatrix}
1 & 0 \\
0 & 0
\end{pmatrix}
=
\begin{pmatrix}
0 & M \\
0 & 0
\end{pmatrix}
+
\begin{pmatrix}
1 & 0 \\
0 & 0
\end{pmatrix}
R, \\
P_2 &=
\begin{pmatrix}
0 & M \\
0 & B
\end{pmatrix}
= 
R
\begin{pmatrix}
0 & 0 \\
0 & 1
\end{pmatrix}
R \\
&=
\begin{pmatrix}
0 & M \\
0 & 0
\end{pmatrix}
+ R
\begin{pmatrix}
0 & 0 \\
0 & 1
\end{pmatrix}
=
\begin{pmatrix}
0 & M \\
0 & 0
\end{pmatrix}
+
\begin{pmatrix}
0 & 0 \\
0 & 1
\end{pmatrix}
R.
\end{align*}
The expressions above make it evident that $P_1$ and $P_2$ are always finitely generated as
ideals, and that  if $M$ is finitely generated as a left $A$-module (respectively, as a right 
$B$-module), then $P_1$ and $P_2$ will be finitely generated as left (respectively, right) 
ideals of $R$.

\begin{example}
Let $K$ be a field, let $M_K$ be an infinite-dimensional vector space viewed as a bimodule 
with the same left and right action, and consider the ring $R = T(K,M,K)$ (so $A = B = K$ above). 
Then $P_1$ and $P_2$ are finitely generated as ideals, but the ideal $I = \left(
\begin{smallmatrix}
0 & V \\
0 & 0
\end{smallmatrix}
\right)$ is not finitely generated.
\end{example}

\begin{example}
Let $K/F$ be an infinite extension of fields. In the $(K,K)$-bimodule $M = K \otimes_F K$, let
$N$ be the sub-bimodule generated by $\{1 \otimes \alpha - \alpha \otimes 1 : \alpha \in K\}$.
This is a maximal sub-bimodule since $M/N \cong K$ as $(K,K)$-bimodules, and one can readily
check that $M$ is not finitely generated (by choice of $K$ and $F$).
In the ring $R = T(K,M,K)$, the choice of $M$ and $N$ guarantees that the ideal $\left(
\begin{smallmatrix}
0 & N \\
0 & 0
\end{smallmatrix}
\right)$ is maximal with respect to not being
finitely generated as an ideal of $R$. But because $R/I \cong \left(
\begin{smallmatrix}
K & K \\
0 & K
\end{smallmatrix}
\right)$, we see that $I$ is not even semiprime.
\end{example}

\begin{example}
We thank Lance Small for suggesting this example.  Let $S$ be a simple ring that is
not right noetherian (such as the endomorphism ring of a countably infinite dimensional 
vector space modulo its finite rank operators), let $Z = Z(S)$ be its central subfield, and
set $R = T(Z,S,S)$. 
Then the prime ideals $P_1$ and $P_2$ of $R$ are both finitely generated as right ideals, 
but for any right ideal $J$ of $S$ that is not finitely generated, the corresponding ideal $\left(
\begin{smallmatrix}
0 & J \\
0 & 0
\end{smallmatrix}
\right)$ of $R$ is not finitely generated.
\end{example}

\begin{remark}
\label{rem:Cohen}
Thanks to the examples given above, we see that for a general
ring $R$ the family $\F_\mathrm{fg}$ of finitely generated ideals of $R$ need not be an Oka family.
For if $\F_\mathrm{fg}$ were Oka and all prime ideals of $R$ were finitely generated, then by
Theorem~\ref{PIP supplement} all ideals of $R$ would be finitely generated, contrary
to those examples. 
Similarly, the family of ideals that are finitely generated as right
ideals is not an Oka family.
These counterexamples sharply contrast the case of commutative rings~\cite[Proposition~3.16]{LR}
and the case of Oka families of right ideals in noncommutative rings~\cite[Proposition~3.7]{Reyes}. 
\end{remark}

The examples above naturally suggest the following question: 
do there exist simple rings $A$ and $B$ and an $(A,B)$-bimodule $M$ such that $M$ is finitely 
generated as both a left $A$-module and a right $B$-module, but such that $M$ has a sub-bimodule
that is not finitely generated? For given such data, the ring $R = T(A,M,B)$ will have both prime ideals
finitely generated as left ideals and as right ideals, but will have an ideal that is not finitely generated.
Consequently, for this ring $R$ the family of ideals that are finitely generated both as left ideals and
as right ideals would fail to be an Oka family. 
We suspect that such $(A,M,B)$ should exist, but we have yet to produce an explicit example.

\section{Some families satisfying the properties $(P_i)$}
\label{P_i family section}

In this section we provide examples of families satisfying the conditions $(P_1)$,
$(P_2)$, and $(P_3)$.  Comparing this section with the next, one sees that many
well-known examples of the noncommutative ``maximal implies prime'' phenomenon 
come from such families. We begin with some of these classical examples.

Recall that a subset $S \subseteq R$ containing $1$ is called an \emph{$m$-system}
if, for every $a,b \in S$, there exists $r \in R$ such that $arb \in S$.
It is well-known (and easy to show) that an ideal $I\unlhd R$ is prime if and only if 
$R \setminus I$ is an $m$-system. The notion of $m$-system generalizes
the concept of a multiplicative subset of $R$, more common in commutative algebra.
Our first application of the Prime Ideal Principle recovers a well-known
generalization of the fact that an ideal in a commutative ring maximal with respect
to being disjoint from some fixed multiplicative set is prime.
The last part of this proposition is due to McCoy~\cite[Lem.~4]{McCoy}.

\begin{proposition}
\label{m-system}
Let $S\subseteq R$ be an $m$-system. Then the family 
$\F = \{ I \unlhd R : I \cap S \neq \varnothing \}$ is $(P_{1})$. Hence an ideal
maximal with respect to being disjoint from $S$ is prime, and every (nonzero) ideal
of $R$ intersects $S$ if and only if every (nonzero) prime ideal intersects $S$.
\end{proposition}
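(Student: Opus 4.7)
The plan is to verify directly that $\F$ satisfies the property $(P_1)$; once that is done, both remaining assertions follow formally from Theorem~\ref{PIP} and Theorem~\ref{PIP supplement}. All the real content lives in the $(P_1)$ check, and within it the only subtlety is using the full $m$-system hypothesis correctly in the noncommutative setting.

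For the $(P_1)$ verification, first note that $R \in \F$ because $1 \in S$ by definition of an $m$-system. The semifilter property is immediate: if $I \in \F$ and $I \subseteq J$, then $\varnothing \neq I \cap S \subseteq J \cap S$. For monoidality, suppose $I, J \in \F$ and pick $a \in I \cap S$ and $b \in J \cap S$. The $m$-system property supplies some $r \in R$ with $arb \in S$. Since $b \in J$ and $J$ is two-sided, $rb \in J$; combined with $a \in I$, this gives $arb = a(rb) \in IJ$. Hence $IJ \cap S \neq \varnothing$, so $IJ \in \F$.

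With $(P_1)$ established, Proposition~\ref{logical dependence} gives that $\F$ is Oka, and Theorem~\ref{PIP} yields $\Max(\F') \subseteq \Spec(R)$, which is precisely the statement that any ideal maximal with respect to being disjoint from $S$ is prime. For the final biconditional, one direction is trivial. For the converse, observe that $\F'$ is closed under unions of chains: for a chain $\{I_\alpha\} \subseteq \F'$, one has $\bigl(\bigcup_\alpha I_\alpha\bigr) \cap S = \bigcup_\alpha (I_\alpha \cap S) = \varnothing$. Thus the chain hypothesis of Theorem~\ref{PIP supplement} is satisfied: part~(3) delivers the ``all ideals'' equivalence, and part~(2) applied with $J = 0$ delivers the ``all nonzero ideals'' version.

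The main obstacle—if one can call it that—is recognizing that the noncommutative monoidal step requires the full $m$-system axiom rather than just closure under multiplication: for $a \in I \cap S$ and $b \in J \cap S$, the product $ab$ need not lie in $S$, but the existence of a sandwiched $arb \in S$ is exactly what lands inside $IJ$, using that $J$ (or symmetrically $I$) is two-sided.
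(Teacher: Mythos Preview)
Your proof is correct and follows essentially the same route as the paper: verify $R\in\F$, the semifilter property, and monoidality via the $m$-system axiom, then invoke the Prime Ideal Principle. You are in fact more careful than the paper on the final biconditional, explicitly checking the chain hypothesis needed for Theorem~\ref{PIP supplement}, whereas the paper simply asserts that the remaining claims follow from Theorem~\ref{PIP}.
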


\begin{proof}
Because $1 \in S$, we have $R \in \F$. It is clear that $\F$ is a semifilter.
To see that $\F$ is monoidal, let $A, B \in \F$ and fix $a \in A \cap S$ and
$b \in B \cap S$. Then for some $r \in R$, $arb \in S \cap AB$. Hence
$AB \in \F$. The rest of the proposition follows from the Prime Ideal Principle~\ref{PIP}.
\end{proof}

Next we give an application of this result to the theory of algebras over a
commutative ring $k$. Let $R$ be a $k$-algebra. We will say that an element
$r \in R$ is \emph{integral over $k$} if, for all $r \in R$, there exists a
monic polynomial $f(x) \in k[x]$ such that $f(r)=0$ in $R$. The algebra $R$
is \emph{integral over $k$} if every element of $R$ is integral over $k$; if
$k$ is a field, notice that this is equivalent to $R$ being algebraic over
$k$. The next result shows that, to check whether $R$ is integral over $k$,
it suffices to check whether it is integral modulo each prime ideal.
The fact is well-known; an instance of its application in the noncommutative
setting is~\cite[Thm.~1]{Procesi}.

\begin{proposition}
Let $R$ be an algebra over a commutative ring $k$. The family
$\F = \{I \unlhd R : R/I \text{ is integral over }k \} $ is $(P_{1})$.
In particular, $R$ is integral over $k$ if and only if $R/P$ is integral over $k$ for
all (minimal) $P \in \Spec(R)$.
\end{proposition}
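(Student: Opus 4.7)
The plan is to verify that $\F$ is a $(P_1)$ family directly from the definition, and then reduce the ``in particular'' assertion to an element-wise application of Theorem~\ref{PIP supplement}.

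For the $(P_1)$ verification, $R \in \F$ is immediate since $R/R = 0$. The semifilter property reduces to the standard fact that quotients of integral $k$-algebras are integral: if $I \subseteq J$ and $f(r) \in I$ for some monic $f \in k[x]$, then $f(r) \in J$, so the image of $r$ in $R/J$ is a root of $f$. For monoidality, given $I, J \in \F$ and any $r \in R$, I would pick monic polynomials $f, g \in k[x]$ with $f(r) \in I$ and $g(r) \in J$. Since $k$ maps into the center of $R$, $f(r)$ and $g(r)$ are polynomials in $r$ with central coefficients and therefore commute, so $f(r)g(r) = (fg)(r) \in IJ$, and $fg$ is still monic, exhibiting the integrality of $r + IJ$ in $R/IJ$.

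For the ``in particular'' claim, one direction is immediate from the semifilter property. For the other, I cannot apply Theorem~\ref{PIP supplement}(3) directly to $\F$, because $\F'$ need not satisfy the required chain condition: a chain of ideals with non-integral quotients can have a union whose quotient is integral, after which the semifilter property forbids any strictly larger ideal from lying in $\F'$. To get around this, I would introduce for each $r \in R$ the refined family
\[
\F_r = \{I \unlhd R : f(r) \in I \text{ for some monic } f \in k[x]\}.
\]
The same reasoning shows $\F_r$ is $(P_1)$, hence Oka by Proposition~\ref{logical dependence}, and this time $\F'_r$ does satisfy the chain condition: if $\{I_\alpha\}$ is a chain in $\F'_r$ and some monic $f$ satisfied $f(r) \in \bigcup I_\alpha$, then $f(r)$ would already lie in some $I_\alpha$, contradicting $I_\alpha \in \F'_r$. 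Hence $\bigcup I_\alpha$ is an upper bound in $\F'_r$.

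Finally, assuming $R/P$ is integral for every minimal prime $P$, any prime $Q \in \Spec(R)$ contains a minimal prime (by Zorn's lemma), so $R/Q$ is a quotient of an integral $k$-algebra and thus integral; hence $Q \in \F \subseteq \F_r$ for every $r \in R$. Applying Theorem~\ref{PIP supplement}(3) to $\F_r$ then yields $0 \in \F_r$, meaning $r$ is integral over $k$; since $r$ was arbitrary, $R$ itself is integral over $k$. The main obstacle will be noticing the failure of the chain condition for $\F$ and repairing it by the element-wise passage to $\F_r$.
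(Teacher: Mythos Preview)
Your proof is correct and follows essentially the same route as the paper. The only organizational difference is that the paper introduces the element-wise families from the outset: it sets $S_a = \{f(a) : f \in k[x] \text{ monic}\}$, notes this is multiplicative (your monoidality computation in disguise), invokes Proposition~\ref{m-system} to get each $\F_a$ is $(P_1)$, and then obtains $\F = \bigcap_{a} \F_a$ as an intersection of $(P_1)$ families. You instead verify $(P_1)$ for $\F$ directly and bring in $\F_r$ only for the ``in particular'' clause. Your explicit discussion of why the chain hypothesis of Theorem~\ref{PIP supplement} holds for $\F_r'$ but may fail for $\F'$ is a point the paper leaves implicit; your example-style reasoning there is correct (e.g.\ in $k[x_1,x_2,\dots]$ the chain $I_n = (x_1,\dots,x_n)$ lies in $\F'$ but its union does not).
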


\begin{proof}
First, fix an element $a \in R$ and define the multiplicative set
$S_a = \{f(a) \in R : f(x) \in k[x] \text{ is monic}\} $. Then by Proposition~\ref{m-system},
the family $\F_a = \{I \unlhd R : I \cap S_a \neq \varnothing\}$ is $(P_1)$.
Notice that $\F_{a}$ is precisely the family of ideals such that the image of
$a$ in $R/I$ is integral over $k$. Now notice that the family $\F$ in question
is given by $\F = \bigcap\nolimits_{a\in R}\F_a$. Because $\F$ is an intersection
of $(P_1)$ families, $\F$ itself is $(P_1)$. The last statement now follows
from Theorem~\ref{PIP supplement}.
\end{proof}

While on the subject of polynomials, we turn our attention to PI-algebras.
We continue to assume that $k$ is a commutative ring. Let
$k\langle x_1,x_2, \ldots \rangle $ denote the ring of polynomials over $k$
in noncommuting indeterminates $x_{i}$. Suppose that $R$ is a $k$-algebra
and $f(x_1, \ldots ,x_n) \in k\langle x_i \rangle$. Then we denote
$f(R) = \{f(r_1, \ldots , r_n) \in R : r_1, \dots, r_n \in R\}$. One says that $f$
is \emph{monic} if one of its monomials of highest total degree has coefficient~1.

\begin{lemma}
If $f, g \in k \langle x_1, x_2, \dots \rangle$ are monic, then $fg$ is monic.
\end{lemma}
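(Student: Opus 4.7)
The plan is to isolate the top-degree homogeneous components of $f$ and $g$ and use the unique-factorization property of words in the free monoid on $\{x_1,x_2,\dots\}$ to locate a coefficient-$1$ monomial in the top-degree component of $fg$.

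First I would decompose: write $f = f_m + f_{<m}$ and $g = g_n + g_{<n}$, where $m = \deg f$, $n = \deg g$, the pieces $f_m$ and $g_n$ are the $k$-linear combinations of words of length $m$ and $n$ respectively, and $f_{<m}$, $g_{<n}$ collect all monomials of strictly smaller total degree. Since multiplication in $k\langle x_1,x_2,\dots\rangle$ is graded by total degree, every term of $f_{<m} \cdot g$, $f \cdot g_{<n}$, etc., has total degree strictly less than $m+n$. Hence the homogeneous component of $fg$ in total degree $m+n$ equals $f_m g_n$, and it suffices to show that this homogeneous piece contains a word with coefficient $1$.

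By hypothesis, there is a word $\mu$ of length $m$ occurring in $f_m$ with coefficient $1$, and a word $\nu$ of length $n$ occurring in $g_n$ with coefficient $1$. I would then compute the coefficient of the concatenated word $\mu\nu$ in $f_m g_n$: it is
\[
\sum_{\mu' \nu' = \mu\nu} c_{\mu'}(f) \, c_{\nu'}(g),
\]
where the sum is over factorizations of $\mu\nu$ into a length-$m$ prefix $\mu'$ and length-$n$ suffix $\nu'$. The key observation — and the only nontrivial point — is that such a factorization is unique: the free monoid on the letters $x_i$ admits unique splitting of any word at a prescribed position, so necessarily $\mu' = \mu$ and $\nu' = \nu$. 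Thus the coefficient of $\mu\nu$ in $f_m g_n$ is $c_\mu(f)\, c_\nu(g) = 1 \cdot 1 = 1$.

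Since $\mu\nu$ has total degree $m+n = \deg(fg)$ and appears in $fg$ with coefficient $1$, the polynomial $fg$ is monic. The only step that might seem to require care is the uniqueness of the factorization $\mu'\nu' = \mu\nu$ with prescribed lengths, but this is a triviality in the free monoid; all other steps are straightforward bookkeeping about the grading of $k\langle x_1,x_2,\dots\rangle$ by total degree.
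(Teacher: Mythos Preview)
Your proof is correct and follows essentially the same approach as the paper: isolate the top-degree homogeneous components, and use unique factorization of words in the free monoid to see that the product of the distinguished coefficient-$1$ monomials again appears with coefficient~$1$ in $fg$. The paper phrases the unique-factorization step as the observation that the products $u_i v_j$ of top-degree words are all distinct, while you compute the coefficient of the single word $\mu\nu$ directly; these are minor variants of the same argument.
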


\begin{proof}
Suppose that $f$ has degree $m$ and $g$ has degree $n$. Let
\begin{align*}
f &= a_1 u_1 + a_2 u_2 + \cdots + a_r u_r + (\textnormal{lower degree terms}), \\
g &= b_1 v_1 + b_2 v_2 + \cdots + b_s v_s + (\textnormal{lower degree terms}),
\end{align*}
where the $\{u_i\}$ are distinct words of length $m$ and the $\{v_j\}$ are distinct words
of length $n$. We may assume that $a_1 = b_1 = 1$ because $f$ and $g$ are monic.

The highest degree terms of $fg$ are the products $a_i b_j u_i v_j$. Moreover,
the monomials $u_i v_j$ are readily seen to be distinct. Hence $fg$ has $u_1 v_1$ as
one of its leading monomials, with coefficient $a_1 b_1 = 1$, so that $fg$ is monic.
\end{proof}

A $k$-algebra $R$ is a \emph{polynomial identity algebra} (or \emph{PI algebra}) over 
$k$ if $f(R) = 0$ for some
monic polynomial $f \in k\langle x_1,x_2,\ldots\rangle$ (see, for instance, \cite[6.1.2]{Rowen}).
The last sentence of the following is a folk result; for example, it is used implicitly in
the proof of~\cite[Thm.~2]{AmitsurSmall}.  

\begin{proposition}
Let $k$ be a commutative ring and let $S \subseteq k\langle x_1,x_2, \ldots\rangle$ be an 
$m$-system of polynomials in noncommuting indeterminates. For a $k$-algebra $R$, the family
$\F = \{I \unlhd R : R/I \text{ satisfies some } f \in S\}$ is $(P_1)$.
Hence, an ideal maximal with respect to $R/I$ not being a PI-algebra over $k$ is prime.
\end{proposition}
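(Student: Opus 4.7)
The plan is to verify directly that $\F$ satisfies each of the three ingredients of the $(P_1)$ property---containing $R$, being a semifilter, and being monoidal---and then to deduce the final sentence by specializing $S$ to the set of all monic polynomials.

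First, to see that $R \in \F$, I will use that the constant polynomial $1$ lies in $S$ (since $S$ is an $m$-system) and that this identity is vacuously satisfied by the zero ring $R/R$. The semifilter property is then clear: any polynomial identity satisfied by $R/I$ is also satisfied by its further quotient $R/J$ whenever $I \subseteq J$, since evaluation of $f$ in $R/J$ factors through $R/I$.

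The key step is monoidality. Given $I, J \in \F$ with $R/I$ satisfying some $f \in S$ and $R/J$ satisfying some $g \in S$, I will invoke the $m$-system property to produce $h \in k\langle x_1, x_2, \ldots\rangle$ with $fhg \in S$, and then check that $fhg$ is satisfied by $R/IJ$. For any substitution tuple $\vec r$ of elements of $R$, evaluation is a ring homomorphism $k\langle x_1, x_2, \ldots\rangle \to R$ sending $fhg$ to $f(\vec r) \cdot h(\vec r) \cdot g(\vec r)$. By hypothesis $f(\vec r) \in I$ and $g(\vec r) \in J$, while $h(\vec r) \in R$, so the product lies in $I \cdot R \cdot J \subseteq IJ$ because $I$ and $J$ are two-sided ideals. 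This is the step where the two-sided nature of the ideals is essential.

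Finally, for the PI-algebra consequence I will take $S$ to be the set of all monic polynomials in $k\langle x_1, x_2, \ldots\rangle$. This set is an $m$-system: it contains $1$ (monic of degree $0$), and for monic $f, g$ the preceding lemma shows $fg$ is monic, so choosing $h = 1$ realizes $fhg \in S$. With this choice, $\F$ is exactly the family of ideals $I$ for which $R/I$ is a PI-algebra over $k$; since $(P_1) \Rightarrow$ Oka by Proposition~\ref{logical dependence}, Theorem~\ref{PIP} immediately yields that any ideal maximal with respect to $R/I$ not being a PI-algebra is prime. I expect the only real obstacle to be conceptual rather than computational: one must recognize that a single substitution tuple enters all three factors of $fhg$, and that two-sided ideal absorption on both sides is precisely what packages the factored product $f(\vec r) \cdot h(\vec r) \cdot g(\vec r)$ into $IJ$.
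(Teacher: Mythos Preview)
Your argument is correct and follows the same route as the paper: reformulate $\F$ via the condition $f(R)\subseteq I$ for some $f\in S$, observe the semifilter property, and for monoidality use the $m$-system to produce $fhg\in S$ together with the fact that evaluation at a fixed tuple is a $k$-algebra homomorphism, so $(fhg)(\vec r)=f(\vec r)\,h(\vec r)\,g(\vec r)\in I\cdot R\cdot J\subseteq IJ$. The paper writes this more tersely as $(fhg)(R)\subseteq f(R)h(R)g(R)\subseteq AB$, and likewise specializes to the monic polynomials for the final sentence; your version simply unpacks the same containment elementwise.
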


\begin{proof}
Notice that $\F = \{I \unlhd R : f(R) \subseteq I \text{ for some } f \in S\}$.
This makes it clear that $\F$ is a semifilter. To see that $\F$ is monoidal,
suppose that $A,B \in \F$. Choose $f,g \in S$ and
$h \in k\langle x_1, x_2, \ldots\rangle$ such that $f(R) \subseteq A$
and $g(R) \subseteq B$ as well as $fhg \in S$. Then
$(fhg)(R) \subseteq f(R)h(R)g(R) \subseteq AB$ while $fhg\in S$, proving that
$AB \in \F$.

For the last statement, we apply Theorem \ref{PIP} to the family $\F$
taking $S$ to be the set of all monic polynomials in $k\langle x_1,x_2, \ldots\rangle$,
which is multiplicatively closed by the lemma above.
\end{proof}

We recall another familiar result from commutative algebra. For a module
$M_R \neq 0$ over a commutative ring $R$, a \emph{point annihilator} is an
ideal $I\unlhd R$ of the form $I=\ann(m)$ for some $m \in M \setminus\{0\}$.
Then a maximal point annihilator of $M$ is prime. But notice that, because $R$
is commutative, we have $\ann(m) = \ann(mR)$ for all $m\in M$. So a maximal
point annihilator of $M$ is just an ideal maximal
among the ideals of the form $\ann(N)$ where $0 \neq N_R \subseteq M$. From
this perspective, the result generalizes to modules over noncommutative rings: 
for a module $M_R \neq 0$ over a ring $R$, an ideal maximal among the annihilators 
$\ann(N)$ where $0 \neq N_R \subseteq M$ is prime.

This well-known fact is widely attributed to I.\,N.~Herstein; for instance,
see~\cite[Thm.~6]{Kaplansky74}
(Lance Small has communicated to us that the first instance of this result is
likely in~\cite{Herstein}.)
We recover it via the Prime Ideal Principle below.

\begin{proposition}
\label{max annihilator}
Fix a module $M_{R}\neq 0$. Then the family $\F = \{J \unlhd R : NJ=0
\text{ for } N_R \subseteq M \implies N = 0\}$ is $(P_1)$. A
maximal annihilator of a nonzero submodule of $M$ is prime.
\end{proposition}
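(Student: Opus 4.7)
The plan is to verify that $\F$ satisfies each of the three ingredients of property $(P_1)$: it contains $R$, it is a semifilter, and it is monoidal. Once this is done, Proposition~\ref{logical dependence} gives that $\F$ is Oka, and the second assertion will follow from Theorem~\ref{PIP} after identifying a maximal annihilator of a nonzero submodule as an element of $\Max(\F')$.

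First I would recast the definition: $J \in \F$ precisely when no nonzero right $R$-submodule of $M$ is annihilated by $J$. Equivalently, $\F'$ consists of the ideals $J$ for which $J \subseteq \ann(N)$ for some nonzero $N_R \subseteq M$. With this description in hand, $R \in \F$ is immediate from $M \cdot 1 = M$, and the semifilter property is a one-line check: if $I \in \F$ and $J \supseteq I$, then any $N$ with $NJ = 0$ also satisfies $NI = 0$, forcing $N = 0$.

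The main point is the monoidal property, and the key observation I would highlight is that for a right $R$-submodule $N \subseteq M$ and a two-sided ideal $A \unlhd R$, the product $NA$ is itself a right $R$-submodule of $M$, since $(NA)R = N(AR) \subseteq NA$. Here two-sidedness of $A$ is essential. Given $A, B \in \F$ and $N(AB) = 0$, I can then regard $NA$ as a submodule with $(NA)B = 0$; membership $B \in \F$ yields $NA = 0$, and then $A \in \F$ yields $N = 0$. Hence $AB \in \F$.

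Finally, to apply the Prime Ideal Principle I need to see that any maximal annihilator $I = \ann(N_0)$ of a nonzero submodule $N_0 \subseteq M$ lies in $\Max(\F')$. Certainly $I \in \F'$ since $N_0 I = 0$. If $I \subsetneq K$ with $K \in \F'$, then $K \subseteq \ann(N)$ for some nonzero $N \subseteq M$, producing an annihilator of a nonzero submodule strictly containing $I$ and contradicting maximality of $I$. Theorem~\ref{PIP} then declares $I$ prime. I do not anticipate any genuine obstacle; the only delicate point is the observation that two-sidedness of the ideals is exactly what makes $NA$ a submodule and thereby makes the monoidal step work.
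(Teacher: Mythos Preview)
Your proposal is correct and follows essentially the same route as the paper: verify that $\F$ is a semifilter and monoidal via the chain $(NA)B=0 \Rightarrow NA=0 \Rightarrow N=0$, then identify maximal annihilators with elements of $\Max(\F')$ and invoke the Prime Ideal Principle. You supply a bit more detail than the paper (explicitly checking $R\in\F$, noting why $NA$ is a submodule, and spelling out the $\Max(\F')$ identification), but the argument is the same.
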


\begin{proof}
It is straightforward to see that $\F$ is a semifilter. So suppose
that $A,B \in \F$ and that $N_R \subseteq M$ such that $N(AB) = 0$.
Then $(NA)B = 0 \implies NA = 0 \implies N = 0$.

To see that a maximal annihilator is prime, we only need to note that the
maximal annihilators of nonzero submodules of $M$ are precisely the elements
of $\Max(\F')$ and then apply the Prime Ideal Principle~\ref{PIP}.
\end{proof}

Applying this to the module $R_R$ gives the following result. 

\begin{proposition}
In a nonzero ring $R$, the family $\F$ of ideals $I$ that are faithful
as left $R$-modules is a $(P_1)$ family. If $R$ satisfies the ascending chain
condition (ACC) on ideals, then $R$ is a prime ring if and only if every nonzero 
prime ideal of $R$ is left faithful.
\end{proposition}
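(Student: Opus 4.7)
The plan is to derive the $(P_1)$ claim directly from Proposition~\ref{max annihilator} applied to the regular right module $M = R_R$, and to deduce the characterization of primeness via the Prime Ideal Principle supplement (Theorem~\ref{PIP supplement}) combined with Lemma~\ref{prime ring characterization}.

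For the first part, I would check that for an ideal $J \unlhd R$, left faithfulness of $J$ is equivalent to the condition that no nonzero right ideal of $R$ is annihilated on the right by $J$. Indeed, if $rJ = 0$ then the right ideal $rR$ satisfies $(rR)J \subseteq rJ = 0$ (using $RJ \subseteq J$ for the two-sided ideal $J$); conversely, any nonzero right ideal $N$ with $NJ = 0$ contains a nonzero element $n$ with $nJ = 0$. Thus $\F$ coincides with the family furnished by Proposition~\ref{max annihilator} for $M = R_R$, and so is $(P_1)$.

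For the second assertion, the forward direction is immediate and requires no chain condition: in a prime ring, $rI = 0$ with $0 \neq I \unlhd R$ forces $rRb \subseteq rI = 0$ for any chosen $b \in I \setminus \{0\}$, whence $r = 0$, so \emph{every} nonzero ideal (in particular every nonzero prime) is left faithful. For the converse, I would invoke Theorem~\ref{PIP supplement}(2) with $J = (0)$: since $(P_1)$ implies Oka by Proposition~\ref{logical dependence} and since ACC on ideals provides upper bounds for chains in $\F'$, the hypothesis that every nonzero prime lies in $\F$ is upgraded to the conclusion that every nonzero ideal lies in $\F$. Finally, if $R$ were not prime, Lemma~\ref{prime ring characterization} would supply nonzero $a,b$ with $aRb = bRa = 0$; then $I = RaR$ is a nonzero ideal annihilated on the left by the nonzero element $b$, since $bI = (bRa)R = 0$, contradicting the left faithfulness of $I$.

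The main obstacle, and the conceptual hinge of the argument, is this last step. The ``naive'' witness of non-primeness is an equation $aRb = 0$, which detects failure of \emph{right} rather than \emph{left} faithfulness. It is precisely the symmetric form supplied by Lemma~\ref{prime ring characterization}, namely that we may choose the pair $a,b$ with $aRb = bRa = 0$, that allows us to exhibit a nonzero ideal failing left faithfulness and thereby close the contradiction.
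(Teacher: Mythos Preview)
Your proof is correct and follows the paper's approach: both specialize Proposition~\ref{max annihilator} to $M = R_R$ for the $(P_1)$ claim, and both invoke Theorem~\ref{PIP supplement} under ACC for the ``if'' direction.

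One comment: your detour through Lemma~\ref{prime ring characterization} in the final step is unnecessary, and your identification of it as ``the conceptual hinge'' is a misdiagnosis. Once every nonzero ideal lies in $\F$, the complement $\F'$ is exactly $\{0\}$ (the zero ideal is never left faithful in a nonzero ring), so $0 \in \Max(\F')$ and the Prime Ideal Principle~\ref{PIP} immediately gives that $0$ is prime. Equivalently, at the level of ideals: if $R$ is not prime, there are nonzero ideals $A,B$ with $AB = 0$, and then any nonzero $a \in A$ satisfies $aB = 0$, so $B$ is not left faithful --- no symmetric witness is required. The ideal-theoretic formulation of non-primeness already hands you a left annihilator.
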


\begin{proof}
Applying Proposition~\ref{max annihilator} to $M_R = R_R$, we see that the
family $\F$ there corresponds to the family of left-faithful ideals
of $R$. Turning our attention to the final statement, notice that the
``only if" part holds without the ACC assumption. The ``if" part then follows
from Theorem~\ref{PIP supplement}.
\end{proof}

As a variation on the theme of maximal annihilator ideals, there are also
maximal ``middle annihilators." For ideals $X,Y \unlhd R$ we use the notation
$\ann(X,Y) = \{r \in R : XrY=0\}$, which is certainly an ideal of $R$.
We call such an ideal a \emph{middle annihilator} of $R$ if $XY \neq 0$.
This definition originated in unpublished work of Kaplansky
(see~\cite[p. 782]{Krause}), who proved the following ``maximal implies
prime'' result. 

\begin{proposition}
For any ring $R$, the family $\F = \{I \unlhd R : \mbox{for all } X,Y \unlhd R, \  XIY=0 
\implies XY=0\}$ is $(P_1)$. A maximal middle annihilator of $R$ is
prime.
\end{proposition}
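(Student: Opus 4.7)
The goal is to verify the three pieces of the $(P_1)$ definition for $\F$, then identify $\Max(\F')$ with the maximal middle annihilators so that Theorem~\ref{PIP} applies.

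First I would check that $R \in \F$ (for any ideals $X, Y$, the containment $XY \subseteq XRY$ shows $XRY = 0 \Rightarrow XY = 0$), and that $\F$ is a semifilter (if $I \in \F$ and $J \supseteq I$, then $XJY = 0$ forces $XIY = 0$, hence $XY = 0$). These are essentially bookkeeping.

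The substantive step is monoidality. Suppose $A, B \in \F$, and let $X, Y \unlhd R$ with $X(AB)Y = 0$. Rewrite this as $(XA) \cdot B \cdot Y = 0$. Since $XA$ and $Y$ are ideals and $B \in \F$, the defining property of $B$ applied to the pair $(XA, Y)$ yields $(XA) Y = 0$, i.e.\ $X \cdot A \cdot Y = 0$. Now applying $A \in \F$ to the pair $(X, Y)$ gives $XY = 0$, so $AB \in \F$. Together with the preceding observations, this shows $\F$ is $(P_1)$.

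For the last sentence, I would translate between $\F'$ and middle annihilators. An ideal $I$ lies in $\F'$ precisely when there exist ideals $X, Y$ with $XY \neq 0$ and $XIY = 0$, equivalently when $I \subseteq \ann(X,Y)$ for some middle annihilator $\ann(X,Y)$. From this, $\Max(\F')$ coincides with the collection of maximal middle annihilators: given $I \in \Max(\F')$, choose $X, Y$ with $XIY = 0$ and $XY \neq 0$; then $\ann(X,Y) \supseteq I$ still lies in $\F'$, so maximality forces $I = \ann(X, Y)$, and maximality among middle annihilators is immediate because any middle annihilator strictly containing $I$ would also lie in $\F'$. The converse inclusion is similar. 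Applying Theorem~\ref{PIP} to the $(P_1)$ (hence Oka) family $\F$ gives $\Max(\F') \subseteq \Spec(R)$, which is exactly the claim.

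The only potential obstacle is the monoidal step; once one recognizes that iterating the middle-annihilator condition twice (first absorbing $A$ into the left factor, then peeling it back off) does the job, the rest is formal.
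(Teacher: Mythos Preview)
Your proof is correct and follows essentially the same approach as the paper: the monoidality argument is identical (first use $B\in\F$ with the pair $(XA,Y)$, then $A\in\F$ with $(X,Y)$), and the final appeal to the Prime Ideal Principle is the same. You supply more detail than the paper does on why $\Max(\F')$ coincides with the set of maximal middle annihilators, but this is only an elaboration of what the paper asserts in a single sentence.
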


\begin{proof}
Certainly $\F$ is a semifilter. Suppose that $A,B \in \F$, and let
$X,Y \unlhd R$ such that $X(AB)Y=0$. Then $(XA)BY=0$ with $B \in \F$
implies that $XAY=0$. But then $A \in \F$, so we must have $XY=0$. Thus
$AB \in \F$, proving that $\F$ is $(P_1)$. Because $\Max(\F')$ is
equal to the set of maximal middle annihilators of $R$, the last statement
follows from the Prime Ideal Principle~\ref{PIP}.
\end{proof}

Our next aim is to recover the well-known result that if the ideals in a ring
$R$ satisfy ACC, then $R$ has finitely many minimal primes, along with a
noncommutative generalization of a result of D.\, D.~Anderson from~\cite{Anderson}
that any commutative ring in which the minimal primes are finitely generated has
finitely many minimal primes.
This makes use of the following.

\begin{lemma}
\label{right generating sets}
Let $I$ be a right ideal and $J$ be an ideal of a ring $R$.  Suppose that these
are generated as right ideals by $X \subseteq I$ and $Y \subseteq J$.
Then the product $IJ$ is generated as a right ideal by the set $\{xy : x \in X,\ y \in Y\}$.
In particular, if $I$ and $J$ are finitely generated (resp.\ principal) as right ideals,
then $IJ$ is finitely generated (resp.\ principal) as a right ideal.
\end{lemma}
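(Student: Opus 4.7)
The plan is to prove the generating-set assertion by direct manipulation of the defining expressions, and then read off the ``finitely generated'' and ``principal'' consequences as immediate corollaries.

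First I would recall that $IJ$ is by definition the set of finite sums $\sum i_k j_k$ with $i_k \in I$ and $j_k \in J$. Since $J$ is two-sided, $(\sum i_k j_k)r = \sum i_k(j_k r) \in IJ$ for every $r \in R$, so $IJ$ is indeed a right ideal. Moreover, any right ideal containing every product $ij$ with $i \in I$, $j \in J$ automatically contains all such finite sums. Thus, letting $K$ denote the right ideal of $R$ generated by $\{xy : x \in X,\ y \in Y\}$, it suffices to show $ij \in K$ for arbitrary $i \in I$ and $j \in J$.

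Next I would expand the generators. Because $R$ is unital and $X$ (resp.\ $Y$) generates $I$ (resp.\ $J$) as a right ideal, we may write $i = \sum_l x_l r_l$ and $j = \sum_m y_m s_m$ with $x_l \in X$, $y_m \in Y$, and $r_l, s_m \in R$. Then
\[
ij \;=\; \sum_{l,m} x_l\,(r_l y_m)\,s_m.
\]
Here comes the one step that uses anything more than formal bookkeeping: because $J$ is a \emph{two-sided} ideal, each $r_l y_m$ again lies in $J$, so we can re-expand $r_l y_m = \sum_k y'_{l,m,k}\,t_{l,m,k}$ with $y'_{l,m,k} \in Y$ and $t_{l,m,k} \in R$. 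Substituting yields
\[
x_l\,(r_l y_m)\,s_m \;=\; \sum_k (x_l y'_{l,m,k})\,t_{l,m,k} s_m,
\]
a finite sum of right multiples of elements of $\{xy : x \in X,\ y \in Y\}$. Therefore $ij \in K$, and hence $IJ \subseteq K$. The reverse inclusion $K \subseteq IJ$ is immediate, since every $xy$ with $x \in X \subseteq I$ and $y \in Y \subseteq J$ lies in $IJ$, and $IJ$ is a right ideal.

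The ``in particular'' claims are then automatic: if $X$ and $Y$ are both finite, so is the set $\{xy : x \in X,\ y \in Y\}$, and if $X = \{x\}$ and $Y = \{y\}$ are both singletons then $\{xy\}$ is a single element generating $IJ$ as a right ideal. The only subtle step in the whole argument is the re-absorption $r_l y_m \in J$, which is precisely where the two-sidedness of $J$ is used; without it, one would be stuck with a dangling ``$r_l$'' between $x_l$ and $y_m$ that cannot be pushed to the right.
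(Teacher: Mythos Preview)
Your proof is correct and follows essentially the same approach as the paper's: both hinge on the single observation that the left-ideal property of $J$ lets one absorb the intervening $R$-coefficients between generators of $I$ and generators of $J$. The paper just phrases this at the ideal level---writing $IJ = (\sum_{x\in X} xR)J = \sum_{x\in X} xJ = \sum_{x\in X} x(\sum_{y\in Y} yR) = \sum_{(x,y)} xyR$, where the second equality uses $RJ = J$---whereas you unpack the same step element-by-element via $r_l y_m \in J$ and a re-expansion in terms of $Y$.
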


\begin{proof}
The second claim clearly follows from the first. 
Given $I$ and $J$ as above, the fact that $J$ is a (left) ideal gives the second
equality below:
\begin{align*}
IJ &= \left( \sum\nolimits_{x \in X} xR \right) J \\
&= \sum\nolimits_{x \in X} xJ \\
&= \sum\nolimits_{x \in X} x \left( \sum\nolimits_{y \in Y} yR \right) \\
&= \sum\nolimits_{(x,y) \in X \times Y} xy R. \qedhere
\end{align*}
\end{proof}

The following statement under hypothesis~(1) is well known; for instance,
see~\cite[Theorem~3.4]{GoodearlWarfield}.
The version of Anderson's theorem under hypothesis~(2) was first suggested to us by T.\,Y.~Lam.

\begin{proposition}
For any monoidal set $\setS$ of ideals in $R$, the family
$\F = \{I \unlhd R : I\supseteq J \text{ for some } J \in \setS\}$ is 
$(P_1)$. Suppose that one of the following two conditions holds:
\begin{enumerate}
\item $R$ satisfies ACC on ideals;
\item The minimal prime ideals of $R$ are finitely generated as right ideals.
\end{enumerate}
Then the zero ideal is a product of (minimal) prime ideals, and $R$ has finitely
many minimal primes.
\end{proposition}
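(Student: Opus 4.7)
The proof splits into two parts: verifying that $\F$ is $(P_1)$, and then applying the Prime Ideal Principle supplement with an appropriate choice of $\setS$.

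For the first part, the plan is to check the two defining properties directly. The semifilter property is immediate: if $I \supseteq J$ with $J \in \setS$, then any $I' \supseteq I$ also contains $J$. For monoidality, if $I_1 \supseteq J_1$ and $I_2 \supseteq J_2$ with $J_1, J_2 \in \setS$, then $I_1 I_2 \supseteq J_1 J_2$, and $J_1 J_2 \in \setS$ because $\setS$ is monoidal; hence $I_1 I_2 \in \F$. (We also use $R \in \F$, which holds as long as $\setS$ is nonempty, and may be assumed since otherwise the statement is trivial.)

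For the second part, I would take $\setS$ to be the collection of all finite products $P_1 P_2 \cdots P_n$ of minimal prime ideals of $R$; this is monoidal by construction, and minimal primes exist by the usual Zorn's lemma argument on descending chains of prime ideals. Every prime ideal $P$ of $R$ contains some minimal prime $P_0$ (again by Zorn), and $P_0 \in \setS$, so $P \in \F$. To apply Theorem~\ref{PIP supplement}(3) it remains to verify that every nonempty chain in $\F'$ has an upper bound in $\F'$. Under hypothesis~(1), ACC on ideals makes this immediate. Under hypothesis~(2), Lemma~\ref{right generating sets} implies that every $J = P_1 \cdots P_n \in \setS$ is finitely generated as a right ideal; if $\{I_\alpha\}$ is a chain in $\F'$ with union $I$, and if $I \in \F$ then $I \supseteq J$ for some $J \in \setS$, and finitely many right generators of $J$ all lie in some single $I_\alpha$, forcing $J \subseteq I_\alpha$ (using that $I_\alpha$ is two-sided) and hence $I_\alpha \in \F$, a contradiction.

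Applying Theorem~\ref{PIP supplement}(3) then places every ideal of $R$ in $\F$; in particular $0 \in \F$, which by definition of $\F$ means $P_1 P_2 \cdots P_n = 0$ for some minimal primes $P_1, \dots, P_n$. Finally, any minimal prime $Q$ satisfies $Q \supseteq P_1 \cdots P_n$, so primeness gives $Q \supseteq P_i$ for some $i$, and minimality of $Q$ forces $Q = P_i$; thus the complete list of minimal primes of $R$ is contained in $\{P_1, \dots, P_n\}$, proving finiteness.

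The main obstacle, and the only point requiring real care, is the verification of the chain condition on $\F'$ under hypothesis~(2): ideals in $\F$ need not themselves be finitely generated (so the parenthetical criterion in Theorem~\ref{PIP supplement} does not apply directly), and one must exploit the fact that the \emph{witnesses} $J \in \setS$ are finitely generated as right ideals, together with the two-sided nature of the $I_\alpha$, to promote finite right-generation of $J$ into containment $J \subseteq I_\alpha$.
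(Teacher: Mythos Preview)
Your proposal is correct and follows essentially the same approach as the paper: take $\setS$ to be the set of finite products of minimal primes, observe $\Spec(R)\subseteq\F$, verify the chain condition on $\F'$ (trivially under~(1), and under~(2) via Lemma~\ref{right generating sets}), and then apply Theorem~\ref{PIP supplement} to get $0\in\F$. Your writeup in fact fills in more detail than the paper does---the paper dispatches the $(P_1)$ claim as ``clear'' and the chain argument under~(2) as ``easily follows from Zorn's lemma''---and your closing remark about why the parenthetical criterion in Theorem~\ref{PIP supplement} does not apply directly is an accurate and useful observation.
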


\begin{proof}
It is clear that the family $\F$ is $(P_1)$.
To prove the last statement, let $\setS$ be the set of all finite products
of minimal prime ideals of $R$ and let $\F$ be the $(P_1)$ family described
in the Proposition. Notice that $\Spec(R) \subseteq \F$ because every prime
contains a minimal prime.
In the next paragraph, we will show that every nonempty chain in $\F'$
has an upper bound in $\F'$. Theorem~\ref{PIP supplement} thus implies
that $\F$ consists of all ideals. In particular $0 \in \F$, so that the zero
ideal is a product of minimal primes, say $P_1 \cdots P_n = 0$. For any minimal
prime $P \unlhd R$, we have $P \supseteq 0 = P_1 \cdots P_{n}$. So some
$P_i \subseteq P$, giving $P=P_i$. Thus the minimal primes of $R$ are precisely
$P_1, \dots, P_n$.

It remains to verify that every nonempty chain in $\F'$ has an upper bound.
This is trivial in case~(1) holds. If $R$ satisfies~(2) then 
Lemma~\ref{right generating sets} implies that every ideal in $\setS$ is
finitely generated as a (right) ideal, whence the claim easily follows from Zorn's
lemma.
\end{proof}

Another notion that gives rise to prime ideals is the Artin-Rees property.
An ideal $I \unlhd R$ is said to have the \emph{right Artin-Rees property}
if, for any right ideal $K_R \subseteq R$, there exists a positive integer $n$ 
such that $K \cap I^n \subseteq KI$.
This property arises in the study of localization in noetherian rings
(for example, see~\cite[Ch.~13]{GoodearlWarfield}
or~\cite[\S 4.2]{McConnellRobson}). The ring $R$ is said to be a \emph{right
Artin-Rees ring} if every ideal of $R$ has the right Artin-Rees property.

(In commutative algebra, the Artin-Rees Lemma is a result about $I$-filtrations
of finitely generated $R$-modules, where $R$ is commutative noetherian and
$I \unlhd R$---see~\cite[Lem.~5.1]{Eisenbud}. In particular, the lemma implies
that every ideal of  a commutative noetherian ring satisfies the Artin-Rees property.
Thus the family of ideals satisfying the right Artin-Rees property is mostly of interest
in the noncommutative case.)

Parts of the following result were previously observed in~\cite[\S 3]{Sirola}.
 However, the ``maximal implies prime" result seems to be new.

\begin{proposition}
The family of ideals with the right Artin-Rees property in a ring $R$ satisfies $(P_2)$.
Thus an ideal of $R$ maximal with respect to not having the right Artin-Rees 
property is prime.
In particular, if $R$ satisfies ACC on ideals, then $R$ is a right Artin-Rees ring
if and only if every prime ideal of $R$ has the right Artin-Rees property.
\end{proposition}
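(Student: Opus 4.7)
The plan is to verify the two conditions defining $(P_2)$, namely monoidality and the condition that $J \in \F$ with $I^2 \subseteq J \subseteq I$ forces $I \in \F$, and then to deduce both ``maximal implies prime'' and the ACC statement from the Prime Ideal Principle and its supplement.

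For the second, easier part of $(P_2)$, suppose $J$ has the right Artin-Rees property and $I^2 \subseteq J \subseteq I$. Given any right ideal $K_R \subseteq R$, I would choose $n$ so that $K \cap J^n \subseteq KJ$, and then observe that $I^{2n} \subseteq J^n$ and $KJ \subseteq KI$, yielding
\[
K \cap I^{2n} \subseteq K \cap J^n \subseteq KJ \subseteq KI,
\]
which verifies the right Artin-Rees property for $I$.

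For monoidality, suppose $I, J \in \F$ and let $K_R$ be any right ideal. First I would apply the right Artin-Rees property of $I$ to $K$ to produce an integer $m$ with $K \cap I^m \subseteq KI$, then apply the right Artin-Rees property of $J$ to the right ideal $KI$ to produce an integer $p$ with $KI \cap J^p \subseteq KIJ$. For $n \geq \max(m,p)$, the inclusion $(IJ)^n \subseteq I^n \cap J^n \subseteq I^m \cap J^p$ then gives
\[
K \cap (IJ)^n \subseteq (K \cap I^m) \cap J^p \subseteq KI \cap J^p \subseteq KIJ = K(IJ),
\]
so $IJ \in \F$. The main subtlety here is chaining the two Artin-Rees properties via the intermediate right ideal $KI$; everything else is bookkeeping on powers.

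Once $(P_2)$ is established, Proposition~\ref{logical dependence} gives that $\F$ is Oka, and Theorem~\ref{PIP} yields that any ideal maximal with respect to not having the right Artin-Rees property is prime. For the final assertion, the ``only if'' direction is trivial. For the ``if'' direction, under ACC on ideals every nonempty chain in $\F'$ has a maximal (hence upper bound) element in $\F'$, so the hypothesis of Theorem~\ref{PIP supplement} is met; assuming every prime ideal of $R$ has the right Artin-Rees property then forces $\F$ to contain all ideals, i.e., $R$ is a right Artin-Rees ring.
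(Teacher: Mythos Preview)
Your proof is correct and follows essentially the same approach as the paper: the verification of the squeeze condition via $I^{2n}\subseteq J^n$ is identical, and your monoidality argument chains the Artin--Rees property of $I$ on $K$ with that of $J$ on $KI$ and bounds powers of $IJ$ inside $I^m\cap J^p$, exactly as the paper does (with $A,B$ in place of $I,J$). The appeals to the Prime Ideal Principle and its supplement for the final two statements are also the same.
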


\begin{proof}
Let $\F$ be the family of ideals in $R$ with the right Artin-Rees property.
Suppose that $J \in \F$ and $I \unlhd R$ with $I^2 \subseteq J \subseteq I$.
Fix a right ideal $K_R \subseteq R$ and an integer $n$ such that
$K \cap J^n \subseteq KJ$. Then $K \cap I^{2n} \subseteq K \cap J^n
\subseteq KJ \subseteq KI$, which proves that $I \in \F$.

Next suppose that $A,B \in \F$. Fix a right ideal $K_R \subseteq R$,
and choose integers $m$ and $n$ such that $K \cap A^m \subseteq KA$ and
$(KA) \cap B^n \subseteq (KA)B$. Then for $N = \max\{m,n\}$, we have 
\begin{align*}
K \cap (AB)^N &= K \cap (AB)^m \cap (AB)^n \\
 & \subseteq (K \cap A^m) \cap B^n \\
 & \subseteq (KA) \cap B^n \\
 & \subseteq KAB.
\end{align*}
Thus $AB \in \F$ and $\F$ is monoidal. So $\F$ satisfies $(P_2)$. The
last two statements follow from Theorems~\ref{PIP} and~\ref{PIP supplement}.
\end{proof}

The next family that we consider is the family of ideal direct summands of
$R$; that is, the family of all ideals $I \unlhd R$ such that $R=I \oplus J$
for some ideal $J \unlhd R$. It is well-known that an ideal $I\unlhd R$ is such
a direct summand if and only if $I = (e)$ for some central idempotent $e$ of $R$, in
which case $J = (1-e)$ (see, for instance,~\cite[Ex.~1.7]{ExercisesClassical}).
In fact, we will work in the slightly more general setting of multiplicative
sets of central idempotents.

\begin{proposition}
Let $R$ be a ring and let $S \subseteq R$ be a multiplicative set of central
idempotents. Then the family $\F = \{(e) \unlhd R : e \in S\}$ satisfies $(P_2)$.
In particular, the family of ideal direct summands of $R$ is $(P_3)$. An ideal
maximal with respect to not being an ideal direct summand is maximal. A ring
$R$ is a finite direct product of simple rings if and only if every ideal of $R$ is
an ideal direct summand, if and only if all maximal ideals of $R$ are ideal direct
summands.
\end{proposition}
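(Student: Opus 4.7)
The plan is to prove the four claims in sequence: (i) $\F = \{(e) : e \in S\}$ satisfies $(P_2)$; (ii) the family of all ideal direct summands is $(P_3)$; (iii) an ideal maximal with respect to not being an ideal direct summand is a maximal ideal of $R$; and (iv) the triple equivalence.

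For (i), I would start with monoidality: for $e, f \in S$, the identity $(e)(f) = eR \cdot fR = efR = (ef)$ (using centrality) exhibits $(e)(f) \in \F$, since $ef \in S$. For the squeeze condition $I^2 \subseteq (e) \subseteq I \Rightarrow I \in \F$, centrality of $e$ gives the ideal decomposition $I = eI \oplus (1-e)I$; since $e \in I$, one has $eI = (e)$, and a direct computation yields $((1-e)I)^2 \subseteq (1-e)I^2 \subseteq (1-e)(e) = 0$. The goal is to conclude that $(1-e)I$ vanishes, so that $I = (e) \in \F$. The main technical obstacle is precisely this final vanishing, where the rigid structure of principal ideals generated by central idempotents must be exploited. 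Claim (ii) is then the specialization to $S$ being the set of all central idempotents, together with the implication $(P_2) \Rightarrow (P_3)$ from Proposition~\ref{logical dependence}.

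For (iii), the Prime Ideal Principle (Theorem~\ref{PIP}) applied to the Oka family $\F$ gives that any $I \in \Max(\F')$ is a prime ideal. To upgrade to maximality, I would take any ideal $J$ with $I \subsetneq J \unlhd R$ and show $J = R$. Maximality of $I$ in $\F'$ forces $J = (e)$ for some central idempotent $e$ of $R$, whose image $\bar e$ in the prime quotient ring $R/I$ is again a central idempotent. Since a prime ring admits only the trivial central idempotents $0$ and $1$, either $\bar e = 0$ (which would force $J \subseteq I$, contradicting $I \subsetneq J$) or $\bar e = 1$ (which gives $J = R$), as required.

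For (iv), the implication ``$R$ is a finite product of simple rings $\Rightarrow$ every ideal is a direct summand'' is by inspection of the ideals of $R_1 \times \cdots \times R_n$ as products $\prod_{j \in T} R_j$, each with the obvious complement. The middle implication is trivial. For the reverse direction ``every maximal ideal is a direct summand $\Rightarrow$ every ideal is a direct summand,'' I would observe that the union of any chain in $\F'$ lies in $\F'$: a hypothetical central idempotent generator of the union would lie in some term of the chain, forcing that term to equal the union and hence be a direct summand, a contradiction. Zorn's lemma then produces a maximal element of $\F'$ whenever $\F'$ is nonempty, and by claim (iii) such an element is a maximal ideal of $R$, contradicting the hypothesis; hence $\F' = \varnothing$. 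Finally, to conclude $R$ is a finite direct product of simple rings from ``every ideal is a direct summand,'' I would derive ACC on ideals by the same union argument (any ascending chain's union $(f)$ has $f$ in some term, forcing stabilization), use ACC to bound the number of pairwise orthogonal central idempotents, and decompose $R$ into finitely many indecomposable central-idempotent summands $R_1 \times \cdots \times R_n$; each $R_j$ inherits the direct-summand property but has no nontrivial central idempotents, so its only ideals are $0$ and $R_j$, and it is therefore simple.
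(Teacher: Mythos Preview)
Your verification of the $(P_2)$ squeeze in (i) stalls at exactly the point you flag, and the obstacle is genuine: from $((1-e)I)^2 = 0$ one cannot conclude $(1-e)I = 0$, and indeed the implication $I^2 \subseteq (e) \subseteq I \Rightarrow I \in \F$ can fail. Take $R = k[x]/(x^2) \times k$ over a field $k$ with $S = \{(1,1),(0,1)\}$, so that $\F = \{R,\ 0\times k\}$; with $e = (0,1)$ and $I = k\bar x \times k$ one has $I^2 = 0\times k = (e) \subsetneq I$, while $I \notin \F$. So no appeal to ``rigid structure'' will close this gap. The paper's proof does not attempt your direction at all: it verifies the \emph{reversed} squeeze $(e)^2 \subseteq I \subseteq (e)$, which is immediate since $(e)^2 = (e)$ forces $I = (e)$. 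Whether one reads this as invoking an alternate characterization of $(P_2)$ from the references cited after Proposition~\ref{logical dependence} or as a slip, everything needed downstream (including your (ii)) is only $(P_3)$, and that holds directly because $(e)(f) = (ef) = (e)\cap(f)$ for central idempotents $e,f \in S$, so $AB \subseteq I \subseteq A\cap B$ forces $I = AB \in \F$.

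Your arguments for (iii) and (iv) are correct and close in spirit to the paper's. For (iii) both you and the paper pass to the prime quotient and use that a prime ring has only trivial central idempotents. For (iv), your derivation of ``every ideal a summand $\Rightarrow$ finite product of simple rings'' via ACC on ideals and a primitive central-idempotent decomposition is more hands-on than the paper's route, which instead observes that $R$ is then a finitely generated semisimple module over its enveloping ring $R \otimes_{\Z} R^{\mathrm{op}}$ and decomposes it as a finite direct sum of simple sub-bimodules (that is, minimal ideals).
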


\begin{proof}
To prove that $\F$ is monoidal, let $e,e' \in S$ so that $(e)$ and
$(e')$ are arbitrary ideals in $\F$.  Then $ee' \in S$, so we have
$(e)(e') = (ee') \in \F$. Also, if $I \unlhd R$ with $(e)^2 \subseteq I \subseteq (e)$,
then $(e)^2 = (e^2) = (e)$ implies that $I = (e) \in \F$. Thus $\F$ satisfies
$(P_2)$.

Taking $S$ to be the multiplicative set of \emph{all} central idempotents
of $R$, the family $\F$ becomes the set of ideal direct summands of $R$.
Now let $M \in \Max(\F')$. By the Prime Ideal Principle~\ref{PIP}, $M$ is prime.
In the factor ring $R/M$, every ideal is generated by a central idempotent.
We want to prove that $R/M$ is simple.
Let $0 \neq I \unlhd R/M$ with $I = (e)$ for some central idempotent $e \in R/M$.
Then $R/M = (e) \oplus (1-e)$, and $(e)(1-e) = (0)$. Because $R/M$ is prime
and $(e) \neq 0$, we must have $(1-e)=0$. Thus $R/M = (e) = I$, proving that
$R/M$ is simple. 

The equivalence of the condition that all ideals are direct summands with the
condition that all maximal ideals are summands follows from Theorem~\ref{PIP supplement}
along with the fact that every ideal in $\F$ is finitely generated.
It remains to show that, if all ideals are direct summands, then $R$ is a finite direct
product of simple rings (the converse being clear). If all ideals of $R$ are direct summands, 
then $R$ is a finitely generated semisimple module over
the enveloping ring $R^e = R \otimes_{\Z} R^{\mathrm{op}}$. By the classical theory of
semisimple modules (see~\cite[\S 2]{FC}), this means that $R$ is a direct sum
of simple $R^e$-submodules. But a simple $R^e$-submodule is a minimal ideal,
and the desired equivalence easily follows.
\end{proof}

Next we investigate the ideals $I \unlhd R$ such that $R/I$ is Dedekind finite.
(Recall that a ring $R$ is \emph{Dedekind finite}, alternatively called \emph{directly finite}
or \emph{von Neumann finite}, if $ab=1$ implies $ba=1$ for any $a,b \in R$.)

\begin{proposition}
The family $\F = \{I \unlhd R : R/I \text{ is Dedekind finite}\}$ is
$(P_3)$. An ideal maximal with respect to $R/I$ not being Dedekind finite is
prime.
\end{proposition}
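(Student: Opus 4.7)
The plan is to verify condition $(P_3)$ directly for $\F$; the ``maximal implies prime'' assertion then follows immediately from Proposition~\ref{logical dependence} together with the Prime Ideal Principle (Theorem~\ref{PIP}). Note that $R \in \F$ because the zero ring is (vacuously) Dedekind finite. Suppose $A, B \in \F$ and $I \unlhd R$ with $AB \subseteq I \subseteq A \cap B$. I need to show $R/I$ is Dedekind finite.

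So assume $a, b \in R$ satisfy $ab \equiv 1 \pmod{I}$, writing $ab = 1 + i$ with $i \in I$. The strategy is to show $c := 1 - ba$ lies in $I$. First, since $I \subseteq A$, we have $ab \equiv 1 \pmod{A}$, and Dedekind finiteness of $R/A$ gives $ba \equiv 1 \pmod{A}$, i.e.\ $c \in A$. The same reasoning with $B$ in place of $A$ yields $c \in B$, so
\[
c \in A \cap B.
\]

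The main idea is now to exploit this containment against the hypothesis $AB \subseteq I$, together with a direct computation showing $c$ is idempotent modulo $I$. On the one hand, since $c$ lies in both $A$ and $B$,
\[
c^2 \in (A \cap B)(A \cap B) \subseteq AB \subseteq I.
\]
On the other hand, compute
\[
ac = a(1 - ba) = a - (ab)a = a - (1+i)a = -ia \in I,
\]
since $I$ is a two-sided ideal. Therefore
\[
c^2 = (1 - ba)c = c - b(ac) \equiv c \pmod{I}.
\]
Combining the two displays gives $c \equiv c^2 \equiv 0 \pmod{I}$, so $ba \equiv 1 \pmod{I}$, and $R/I$ is Dedekind finite.

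The only real obstacle is finding the right algebraic manipulation that turns the set-theoretic constraints $c \in A \cap B$ and $AB \subseteq I$ into membership of $c$ itself in $I$; the bootstrap via the identity $c^2 \equiv c \pmod{I}$ is the key trick. Once $(P_3)$ is established, the last sentence of the proposition is immediate: any $I \in \Max(\F')$ is prime by Theorem~\ref{PIP}, since $(P_3)$ implies the Oka property by Proposition~\ref{logical dependence}.
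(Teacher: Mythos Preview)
Your proof is correct and follows essentially the same approach as the paper: both show $c = 1 - ba \in A \cap B$, deduce $c^2 \in AB \subseteq I$, and then use the identity $c - c^2 = b(1-ab)a \in I$ (equivalently, your $c^2 = c - b(ac)$ with $ac \in I$) to conclude $c \in I$. The only difference is presentational---you frame the key step as ``$c$ is idempotent mod $I$,'' while the paper writes out $1 - ba = (1 - 2ba + baba) + (ba - baba)$ explicitly---but the underlying computation is identical.
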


\begin{proof}
Let $A,B \in \F$ and $I \unlhd R$ with $AB \subseteq I\subseteq A\cap B$. We
want to show that $I \in \F$. This is equivalent to showing that if $a,b \in R$
with $1-ab \in I$, then $1-ba \in I$. If $1-ab \in I \subseteq A \cap B$ then
we must have $1-ba \in A \cap B$ because $A,B \in \F$. But then $1-2ba+baba =
(1-ba)^2 \in AB \subseteq I$. Also notice that $ba-baba = b(1-ab)a \in I$
because $1-ab\in I$. Thus
\[
1-ba = (1-2ba+baba) + (ba-baba) \in I,
\]
proving that $I\in \F$. The last statement now follows from Theorem~\ref{PIP}.
%\todo{Did Lance say this implies primitive?}
\end{proof}

Consideration of flat modules leads to the following example. Notice that
the conclusion here that the family is ($P_3$) is stronger than the one given 
in~\cite[(5.7)]{LR} when restricted to the commutative case.

\begin{proposition}
Let $R$ be a ring and set $\F = \{I \unlhd R : (R/I)_R \text{ is flat}\}$.
Then $\F$ is a $(P_3)$ family. In particular, $\F$ is monoidal and an ideal
$P$ maximal with respect to $(R/P)_R$ not being flat is a prime ideal.
\end{proposition}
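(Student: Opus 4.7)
The plan is to invoke the standard criterion that a cyclic right $R$-module $R/I$ is flat if and only if $I \cap L = IL$ for every left ideal $L \unlhd R$. This criterion arises from the identification $\operatorname{Tor}_1^R(R/I, R/L) \cong (I \cap L)/IL$, obtained by tensoring the short exact sequence $0 \to I \to R \to R/I \to 0$ with $R/L$ and computing the kernel of the induced map $I/IL \to R/L$.

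To verify $(P_3)$, I assume $A, B \in \F$ and $I \unlhd R$ with $AB \subseteq I \subseteq A \cap B$, and fix an arbitrary left ideal $L \unlhd R$. The containment $IL \subseteq I \cap L$ is immediate since $I$ is two-sided and $L$ is a left ideal. For the reverse, I chain the two flatness hypotheses as follows:
\begin{align*}
I \cap L \;&\subseteq\; (A \cap B) \cap L \;=\; A \cap (B \cap L) \\
&=\; A \cap BL \\
&=\; A(BL) \\
&=\; (AB)L \;\subseteq\; IL.
\end{align*}
The second equality uses the flatness of $R/B$ applied to the left ideal $L$. For the third, one observes that $BL$ is itself a left ideal of $R$ (because $L$ is a left ideal and $B$ is two-sided), so the flatness of $R/A$ may be applied to it. The associativity $A(BL) = (AB)L$ is the usual identity for a product of a two-sided ideal with a left ideal, and $AB \subseteq I$ supplies the final inclusion.

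There is no substantive obstacle here: the two-sided sandwich $AB \subseteq I \subseteq A \cap B$ built into $(P_3)$ is exactly what is needed to chain two successive applications of the flatness criterion, and the fact that the product of a two-sided ideal with a left ideal remains a left ideal keeps the iteration inside the class of test objects. The one step that must be handled carefully is the identification of the correct flatness criterion for cyclic quotients; once that is in place, the verification is a four-line chase.
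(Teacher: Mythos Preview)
Your proof is correct and essentially identical to the paper's own argument: both invoke the flatness criterion $I \cap L = IL$ for all left ideals $L$ (the paper cites \cite[(4.14)]{Lectures}) and then verify $(P_3)$ via the same chain $I \cap L \subseteq A \cap (B \cap L) = A \cap BL = (AB)L \subseteq IL$. Your explicit remark that $BL$ is again a left ideal, so that the criterion for $A$ may be applied to it, is a helpful clarification that the paper leaves implicit.
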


\begin{proof}
Applying~\cite[(4.14)]{Lectures} to the exact sequence $0 \to I \to R \to
R/I \to 0$, we see that $(R/I)_R$ is flat if and only if, for every left ideal
$_{R}L \subseteq R$ one has $I \cap L \subseteq IL$ (so that $I \cap L = IL$).
Thus 
\[
\F = \{I \unlhd R : I \cap L \subseteq IL \text{ for all } {}_R L \subseteq R\}.
\]
Suppose that $I, A, B \unlhd R$ with $A, B \in \F$ and $AB \subseteq I \subseteq A \cap B$.
Then $I\in \F$ because, for every ${}_R L \subseteq R$, we have
\begin{align*}
I \cap L &\subseteq (A \cap B) \cap L \\
&= A \cap (B \cap L) \\
&= A \cap (BL) \\
&= (AB)L \\
&\subseteq IL.
\end{align*}
The last two statements follow from the the fact that $(P_3)$
families are monoidal and from the Prime Ideal Principle~\ref{PIP}.
\end{proof}

\section{Oka families and families constructed from module categories}
\label{Oka family section}

In this final section, we provide examples of families that satisfy the $r$-Oka and
Oka properties. Beginning with Proposition~\ref{right module categories}, these 
examples are produced from classes of modules that satisfy certain properties. 
Some of those examples in fact satisfy the strongest property ($P_1$), but they are
proved in this section as applications of the module-theoretic methods to be
introduced.

Kaplansky showed~\cite[p.~486]{Kaplansky49} that a commutative ring $R$ is a principal
ideal ring (that is, every ideal of $R$ is principal) if and only if every prime ideal
of $R$ is principal. Here we give a certain generalization of this
result to noncommutative rings. (A different generalization for one-sided
ideals was given in~\cite[Theorem~5.11]{Reyes2}.) We will say that an element $x \in R$ is 
\emph{normal} if $xR=Rx$, in which case we have $xR = (x) = Rx$.
This is equivalent to saying that both inclusions $xR \subseteq (x)$ and
$Rx \subseteq (x)$ are equalities. The set of normal elements is multiplicatively
closed: if $x,y \in R$ are normal, then $xyR=xRy=Rxy$.

Let us refine some of our notation. For a right ideal $I_{R} \subseteq R$ and
an element $x\in R$, we define $x^{-1}I = \{y \in R : xy \in I \}$ which
is \emph{a priori} only a right ideal of $R$. However, if $I$ is a two-sided
ideal and $x$ is normal, then one can easily verify that $x^{-1}I = (x)^{-1}I$, in
which case $x^{-1}I $ is an ideal.

\begin{proposition}
\label{prop:principal normal}
If $S$ is a multiplicative set of normal elements in a ring $R$,
then the set $\F = \{(s) : s \in S \}$ is a strongly $r$-Oka family.
Every ideal of a ring $R$ is generated by a single normal element if and only if
every prime ideal in $R$ is generated by a normal element.
\end{proposition}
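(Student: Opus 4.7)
My plan is to establish the two assertions in sequence: first prove $\F$ is strongly $r$-Oka, then deduce the characterization via the Prime Ideal Principle supplement.

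For the strongly $r$-Oka claim, I will suppose $I + J = (s)$ and $J^{-1}I = (t)$ with $s, t \in S$, and aim to show $I = (st)$; since $S$ is multiplicatively closed this places $I$ in $\F$. The easy inclusion $(st) \subseteq I$ will come from writing $s = i + j$ with $i \in I$ and $j \in J$, so that $st = it + jt$, where $it \in I$ by the right-ideal property and $jt \in Jt \subseteq I$ by the definition of $J^{-1}I = (t)$; then $I$ being two-sided gives $(st) \subseteq I$.

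The reverse inclusion $I \subseteq (st)$ is the crux of the argument. I will introduce $K = \{r \in R : sr \in I\}$ and use the normality of $s$ --- specifically that $sq \in Rs$ for every $q \in R$ --- to argue that $K$ is a two-sided ideal, in fact equal to $(s)^{-1}I$. Since $I \subseteq I + J = (s) = sR$, each $x \in I$ factors as $sr$ with $r \in K$, giving $I = sK$. The containment $J \subseteq (s)$ forces $K = (s)^{-1}I \subseteq J^{-1}I = (t)$, so $I = sK \subseteq s \cdot (t) = stR = (st)$, completing the equality. The main obstacle here lies in upgrading the naive right ideal $\{r : sr \in I\}$ to a two-sided ideal, which is exactly the point at which the normality hypothesis on $s$ is essential and distinguishes the argument from its commutative prototype.

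For the equivalence, I will take $S$ to be the multiplicative monoid of all normal elements of $R$ and $\F = \{(s) : s \in S\}$. By the first part and Proposition \ref{logical dependence}, $\F$ is Oka, and every member of $\F$ is singly generated (hence finitely generated), so the chain hypothesis of Theorem \ref{PIP supplement} is satisfied. The trivial direction of the equivalence is immediate, and the nontrivial direction will then follow at once from Theorem \ref{PIP supplement}(3) applied to $\F$.
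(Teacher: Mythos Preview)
Your proof is correct and follows essentially the same approach as the paper. The only cosmetic difference is that the paper observes the \emph{equality} $(s)^{-1}I = (I+J)^{-1}I = J^{-1}I$ (since $Ir \subseteq I$ automatically), which yields $I = s \cdot (s^{-1}I) = s(t) = (st)$ in one stroke, whereas you use only the containment $(s)^{-1}I \subseteq J^{-1}I$ for the inclusion $I \subseteq (st)$ and supply a separate direct argument for $(st) \subseteq I$; both routes are equally valid and rest on the same key identity $I = s(s^{-1}I)$ coming from normality of $s$.
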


\begin{proof}
Let $I,J \unlhd R$ such that $(I,J),J^{-1}I \in \F$. Choose $x,y \in S$
such that $(I,J) = (x)$ and $J^{-1}I=(y)$. We claim that $I = (xy)$;
because $xy \in S$ we will therefore have $I \in \F$.
Noticing that $I \subseteq (I,J) = (x) = xR$, it is
easy to verify that $I = x(x^{-1}I)$. But $x$ is normal, giving the first
of the following equalities: $x^{-1}I = (x)^{-1}I = J^{-1}I = (y)$.
Once more, normality of $x$ gives the final equality in $I = x(x^{-1}I) = x(y) = (xy)$.

For the last statement, we apply Theorem~\ref{PIP supplement} to the family
$\F$ taking $S$ to be the set of \emph{all} normalizing elements.
\end{proof}

In the special case of a right noetherian ring, for an ideal to be principally generated
by a normal element, it suffices to test separately that it is principal as a left and a right
ideal.
The following may be a known folk result, but we became aware of it in a conversation with 
Konstantin Ardakov on Mathematics Stack 
Exchange.\footnote{See \url{http://math.stackexchange.com/q/627609}}

\begin{lemma}
\label{lem:left right principal}
Let $I$ be an ideal of a ring $R$ such that $I = aR = Rb$ for some $a,b \in R$. Assume that
one of the following hypotheses holds:
\begin{enumerate}
\item $R$ is right noetherian;
\item $R$ satisfies ACC on principal right ideals and $aR$ is a projective right ideal
(e.g., the right annihilator of $a$ is zero).
\end{enumerate}
Then $I = bR$ and $b$ is normal.
\end{lemma}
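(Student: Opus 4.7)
Since $b \in I = aR$ and $a \in I = Rb$, we may write $b = ar$ and $a = sb$ for some $r, s \in R$; substitution gives the identity $a = sar$. My plan is to show that left multiplication by $s$ induces a right $R$-module automorphism $\phi : I \to I$. Granting this, since $\phi(bR) = sbR = aR = I$ and $\phi(I) = I$, injectivity of $\phi$ yields $bR = I$; combined with $I = Rb$, this forces $b$ to be normal with $I = bR$.

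The map $\phi(y) := sy$ is well-defined on $I$ because $I$ is two-sided, and it is right $R$-linear since left and right multiplications commute. Its image is $saR$, and because $a = sar \in saR \subseteq aR$, we get $saR = aR = I$, so $\phi$ is surjective. Under hypothesis~(1), the ring $R$ is right noetherian, so the principal right ideal $I = aR$ is a noetherian right $R$-module; the standard ACC argument on $\{\ker \phi^n\}$ then shows that any surjective endomorphism of such a module is injective, so $\phi$ is an isomorphism.

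Under hypothesis~(2), the projectivity of $aR$ produces an idempotent $e \in R$ with $a = ae$ and $\{x \in R : ax = 0\} = (1-e)R$; under the induced isomorphism $aR \cong eR$, the map $\phi$ becomes left multiplication by an element $f = eu$ of the corner ring $S := eRe$, where $u \in R$ is any element with $sa = au$ (one checks $eu = eue$ using $ae = a$, so indeed $f \in S$). Surjectivity of $\phi$ translates to $f$ having a right inverse $c$ in $S$, and the ACC on principal right ideals in $R$ restricts to the same condition on $S$ (for $x, y \in S$ one has $xS \subseteq yS$ iff $xR \subseteq yR$, since $x = xe = ex$). The final ingredient---which I expect to be the main obstacle of the proof---is the standard fact that a ring with ACC on principal right ideals is Dedekind finite: if $xy = 1 \neq yx$ in such a ring $T$, the idempotents $e_n := y^n x^n$ satisfy $(1-e_n)(1-e_{n+1}) = 1-e_n$ with $e_n \neq e_{n+1}$, producing a strictly ascending chain $(1-e_n) T \subsetneq (1-e_{n+1}) T$ that contradicts ACC. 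Applied to $S$, this yields $cf = 1_S$, so $f$ is a unit in $S$ and $\phi$ is an isomorphism as claimed.
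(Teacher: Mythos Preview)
Your argument is correct, and for hypothesis~(1) it coincides with the paper's proof: both define the left-multiplication map $\lambda = \phi$ by the element carrying $b$ to $a$, check surjectivity, and invoke the Hopfian property of noetherian modules.

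For hypothesis~(2) your route genuinely differs from the paper's. The paper argues directly: assuming $\ker\lambda \neq 0$, it builds a strictly ascending chain $K_1 \subsetneq K_2 \subsetneq \cdots$ with $aR/K_n \cong aR$ for all $n$; projectivity of $aR$ splits each sequence $0 \to K_n \to aR \to aR \to 0$, so each $K_n$ is a summand of the cyclic module $aR$ and hence itself a principal right ideal, contradicting ACC. Your approach instead uses projectivity once to realize $aR \cong eR$, transports $\phi$ to left multiplication by $f \in eRe$, transfers the ACC hypothesis to the corner ring via the equivalence $xS \subseteq yS \iff xR \subseteq yR$ for $x,y \in S$, and then appeals to the classical fact that ACC on principal right ideals forces Dedekind-finiteness. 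Your method is a bit more structural (it isolates exactly the ring-theoretic fact being used), while the paper's is more self-contained and avoids the corner-ring bookkeeping.

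One small correction in your Dedekind-finite step: the identity $(1-e_n)(1-e_{n+1}) = 1-e_n$ that you wrote yields an ascending chain of principal \emph{left} ideals, not right ideals. To obtain $(1-e_n)T \subseteq (1-e_{n+1})T$ you need $(1-e_{n+1})(1-e_n) = 1-e_n$, i.e.\ $e_{n+1}e_n = e_{n+1}$; fortunately this also holds, since $x^{n+1}y^n = x(x^n y^n) = x$ gives $e_{n+1}e_n = y^{n+1}x^{n+1}y^n x^n = y^{n+1}x^{n+1} = e_{n+1}$. With that adjustment the chain argument goes through as you intended.
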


\begin{proof}
Fix $x,y \in R$ such that $a = xb$ and $b = ay$.  
Because $I$ is a left ideal, the function $\lambda(r) = xr$ restricts
to a right $R$-module endomorphism of $I = aR$. This map is surjective because
$a = xb = xay$. In both cases~(1) and~(2), we will prove that $\lambda$ is an 
automorphism.  Because $\lambda(b) = a$ implies that the submodule $bR \subseteq aR$ 
also has image equal to $\lambda(bR) = aR$, injectivity of $\lambda$ will in turn
imply that $bR = aR$ as desired. 

Under hypothesis~(1), the right ideal $I$ is a noetherian (and therefore Hopfian) module, 
whence this surjection is an automorphism. Now suppose~(2) holds, and assume toward a
contradiction that $\lambda$ has kernel $0 \neq K_1 \subseteq aR$. We can inductively
define a chain of right ideals $K_1 \subsetneq K_2 \subsetneq \cdots$ by setting $K_n/K_{n-1}$
to be the (nonzero) kernel of the homomorphism $aR/K_{n-1} \cong aR \overset{\lambda}{\to} aR$.
The third isomorphism theorem thus yields short exact sequences
\[
0 \to K_n \to aR \to aR \to 0
\]
for all $n$. Because $aR$ is projective, each of these sequences splits. Thus $K_n$, being a 
direct summand of the principal right ideal $aR$, is itself principal. Therefore the strictly
ascending chain of principal right ideals $K_n$ contradicts condition~(2).
\end{proof}

\begin{corollary}
Let $R$ be a ring such that either:
\begin{enumerate}
\item $R$ is right noetherian, or
\item all principal right ideals of $R$ are projective (e.g., $R$ is a domain) and $R$
satisfies ACC on principal right ideals.
\end{enumerate}
Then any multiplicative submonoid of the family
\[
\F = \{ I \unlhd R \mid I = aR = Rb \mbox{ for some } a,b \in R\}
\]
(including $\F$ itself) is an $r$-Oka family, and every ideal of $R$ is generated by a single 
normal element if and only if every prime ideal of $R$ is principal as both a left and right
ideal.
\end{corollary}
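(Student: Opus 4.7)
The plan is to deduce the corollary by identifying $\F$, under either hypothesis, with the family of ideals generated by a single normal element, and then invoking the previous two results. First, if $I \in \F$ with $I = aR = Rb$, then Lemma~\ref{lem:left right principal} yields $I = bR$ with $b$ normal; conversely any $(b)$ with $b$ normal equals $bR = Rb$ and so lies in $\F$. Thus $\F$ coincides with the family of normal-principal ideals. Since the product of normal elements is normal and satisfies $(b_1)(b_2) = (b_1 b_2)$, the family $\F$ is itself closed under ideal products, so $\F$ does qualify as a multiplicative submonoid of itself.

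Next, for any multiplicative submonoid $\F_0 \subseteq \F$, I would verify that $\F_0$ is strongly $r$-Oka as follows. Suppose $I, J \unlhd R$ with $(I,J), J^{-1}I \in \F_0$, and choose normal $x, y \in R$ with $(I,J) = (x)$ and $J^{-1}I = (y)$. Because $J \subseteq I+J = (x)$, a direct check gives $(x)^{-1}I = J^{-1}I$, and then the calculation in the proof of Proposition~\ref{prop:principal normal} shows $I = xyR = (xy) = (x)(y)$. Since both $(x) = (I,J)$ and $(y) = J^{-1}I$ lie in $\F_0$ and $\F_0$ is closed under products, $I \in \F_0$. The key observation here is that the output $I = (xy)$ of Proposition~\ref{prop:principal normal} factors as an ideal product of two members of $\F_0$, and this is precisely what strengthens $r$-Okaness from the family $\{(s) : s \in S\}$ attached to a preselected multiplicative set $S$ of normal elements to an arbitrary multiplicative submonoid.

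For the equivalence, I would apply Theorem~\ref{PIP supplement}(3) to $\F$ itself, which is Oka by the previous step. The chain condition holds because each ideal in $\F$ is principal, hence finitely generated. The hypothesis that every prime ideal of $R$ is principal as both a left and a right ideal is the same as saying every prime lies in $\F$, so the supplement concludes that every ideal lies in $\F$, i.e., is generated by a single normal element; the converse implication is immediate. The main obstacle, such as it is, lies in the bookkeeping of Step~2: one must carefully verify the identities $(x)^{-1}I = J^{-1}I$ and $(x)(y) = xyR = (xy)$ using the normality of $x$ and $y$, but each is a short manipulation with $xR = Rx$ and the fact that $I + J = (x)$.
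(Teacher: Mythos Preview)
Your proof is correct and follows essentially the same route as the paper. The paper's argument is terser: it uses Lemma~\ref{right generating sets} to see that $\F$ is closed under products, then Lemma~\ref{lem:left right principal} to identify $\F$ with the family of ideals principally generated by a normal element, and finally appeals directly to Proposition~\ref{prop:principal normal} (implicitly translating a multiplicative submonoid of $\F$ into a multiplicative set $S$ of normal generators). Your version unpacks this last step by re-running the proof of Proposition~\ref{prop:principal normal} and observing the factorization $I=(x)(y)$, which is a nice clarification but not a genuinely different idea.
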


\begin{proof}
By Lemma~\ref{right generating sets}, the family $\F$ is closed under products.
Next, Lemma~\ref{lem:left right principal} implies that every element
of $\F$ is generated by a single normal element. The result now follows from
Proposition~\ref{prop:principal normal}.
\end{proof}

We also mention the following immediate corollary of Lemma~\ref{lem:left right principal}.

\begin{corollary}
If $R$ is a principal left and right ideal ring, then every ideal of $R$ is generated by a
single normal element.
\end{corollary}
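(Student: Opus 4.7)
The plan is to derive this corollary as an immediate consequence of the previous corollary, by checking that the hypotheses apply.

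First I would observe that a principal right ideal ring is automatically right noetherian: every right ideal of $R$ is generated by a single element, hence finitely generated, so the ascending chain condition on right ideals is trivially satisfied. This verifies hypothesis~(1) of the previous corollary.

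Next, by the assumption that $R$ is a principal left and right ideal ring, every ideal $I \unlhd R$ (in particular every prime ideal) is principal as a right ideal and principal as a left ideal; that is, we can write $I = aR = Rb$ for some $a, b \in R$. Thus every prime ideal lies in the family $\F$ of the previous corollary, or equivalently, every prime ideal of $R$ is principal as both a left and right ideal.

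With both conditions in place, the previous corollary applies directly and yields that every ideal of $R$ is generated by a single normal element. There is no real obstacle here — the work has already been done in Lemma~\ref{lem:left right principal} (which converts the two-sided principal presentation into one by a normal generator) and Proposition~\ref{prop:principal normal} (which supplies the Prime Ideal Principle argument via the strongly $r$-Oka property).
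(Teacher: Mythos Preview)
Your argument is correct, but it takes a longer route than the paper does. The paper derives this corollary directly from Lemma~\ref{lem:left right principal} alone, without invoking the previous corollary or any Prime Ideal Principle machinery: given any ideal $I \unlhd R$ (not just a prime one), the hypothesis already gives $I = aR = Rb$ for some $a,b$, and since a principal right ideal ring is right noetherian, Lemma~\ref{lem:left right principal} applies immediately to conclude that $b$ is normal with $I = (b)$. Your approach instead passes through the previous corollary, which uses Proposition~\ref{prop:principal normal} and the Prime Ideal Principle supplement (hence Zorn's lemma) to lift the conclusion from prime ideals to all ideals. That detour is unnecessary here precisely because the hypothesis gives the two-sided principal presentation for \emph{every} ideal up front, not merely for the primes; the PIP is doing no real work in your argument.
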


Now we turn our attention to invertible ideals. Let $R$ be a subring of a ring $Q$.
We will say that an ideal $I \unlhd R$ is \emph{invertible with respect to $Q$} 
(or $Q$-invertible) if 
there exists an $(R,R)$-subbimodule $I^* \subseteq Q$ such that $I \cdot I^* = I^* \cdot I =R$.
Certainly $I^*$ is unique.
We claim that every invertible ideal in $R$ is finitely generated as a right
ideal. To see this, let $I \unlhd R$ be invertible and choose $x_i \in I$ and
$y_i \in I^*$ such that $\sum_{i=1}^{n} x_i y_i = 1$. Then for $x \in I$
we have $x = \sum x_i (y_i x)$ where each $y_i x \in I^* I = R$. Thus
$I = \sum x_i R$ is finitely generated as a right ideal. A symmetric argument 
proves that an invertible ideal is also finitely generated as a left ideal.

In the argument below, the reader is advised to take care not to confuse products
$J^* I$ of $(R,R)$-bimodules inside of $Q$ with ideal quotients
$J^{-1} I = \{ x \in R : xJ \subseteq I \}$.

\begin{lemma}
\label{factorization lemma}
Let $R \subseteq Q$ be as above, and let $I,J \unlhd R$ where $I \subseteq J$
and $J$ is invertible. Then $I = J \cdot (J^{-1}I)$.
\end{lemma}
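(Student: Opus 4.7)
The plan is to prove the two inclusions $J \cdot (J^{-1}I) \subseteq I$ and $I \subseteq J \cdot (J^{-1}I)$ separately, relying only on the definition of $J^{-1}I$ and the defining property $JJ^{*} = J^{*}J = R$ of the inverse bimodule $J^{*} \subseteq Q$.

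The first inclusion is essentially tautological. By the convention $J^{-1}I = \{x \in R : Jx \subseteq I\}$ introduced at the start of Section~\ref{PIP section}, any element of $J \cdot (J^{-1}I)$ is a finite sum of products $j x$ with $j \in J$ and $x \in J^{-1}I$, and each such product lies in $I$ by definition. So $J \cdot (J^{-1}I) \subseteq I$ holds with no hypothesis on $J$.

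For the reverse inclusion I would exploit the hypothesis $I \subseteq J$ together with invertibility. The idea is to exhibit the ``candidate factorization'' $I = J \cdot (J^{*} I)$ inside $Q$ and then argue that the bimodule $J^{*} I$ is actually contained in $J^{-1}I \subseteq R$. Concretely, since $I \subseteq J$ I get $J^{*} I \subseteq J^{*} J = R$, so $J^{*} I$ is an $(R,R)$-subbimodule of $R$, i.e.\ an ideal of $R$. Associativity of multiplication of bimodules in $Q$ then gives
\[
J \cdot (J^{*} I) \;=\; (JJ^{*}) \cdot I \;=\; R \cdot I \;=\; I,
\]
which shows simultaneously that $J^{*}I \subseteq J^{-1}I$ (because multiplying it on the left by $J$ lands in $I$) and that $I = R \cdot I = J^{*}J \cdot I = J \cdot (J^{*}I) \subseteq J \cdot (J^{-1}I)$. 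Combining this with the first inclusion yields $I = J \cdot (J^{-1}I)$.

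The only subtle point, and the one I would state carefully, is that the manipulations above take place in $Q$ rather than in $R$, so I must check that the intermediate bimodule $J^{*}I$ is actually contained in $R$ before identifying it as a subset of the ideal $J^{-1}I$; but that is exactly what the hypothesis $I \subseteq J$ buys us via $J^{*}I \subseteq J^{*}J = R$. Aside from this bookkeeping, the argument is a straightforward associativity computation, so I do not expect any serious obstacle.
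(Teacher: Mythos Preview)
Your argument is correct and essentially identical to the paper's proof: both observe that $I\subseteq J$ forces $J^{*}I\subseteq J^{*}J=R$, deduce $J^{*}I\subseteq J^{-1}I$ from $J(J^{*}I)=(JJ^{*})I=I$, and sandwich $I=J(J^{*}I)\subseteq J(J^{-1}I)\subseteq I$. The only blemish is the inline restatement ``$I=R\cdot I=J^{*}J\cdot I=J\cdot(J^{*}I)$'', where associativity would give $(J^{*}J)I=J^{*}(JI)$ rather than $J(J^{*}I)$; this chain is redundant anyway since your displayed equation already established $I=J(J^{*}I)$, so simply delete it or replace $J^{*}J$ by $JJ^{*}$.
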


\begin{proof}
Notice that $I \subseteq J$ implies that $J^* I \subseteq J^* J = R$ is an
ideal of $R$. Because $J(J^*I) = I$, it follows that $J^*I \subseteq J^{-1}I$. Left
multiplying by $J$ gives $I = J(J^* I) \subseteq J (J^{-1}I) \subseteq I$, and the
desired equality follows.
\end{proof}

\begin{proposition}
Let $R$ be a subring of a ring $Q$. Then any monoidal family $\F$ of $Q$-invertible
ideals of $R$ is a strongly $r$-Oka family. Hence an ideal maximal with respect
to not being invertible is prime.
Every nonzero ideal of $R$ is invertible if and only if every nonzero prime ideal
of $R$ is invertible.

If every nonzero ideal of $R$ is $Q$-invertible, then the following hold:
\begin{enumerate}
\item $R$ is a prime ring;
\item If every nonzero ideal of $Q$ has nonzero intersection with $R$, then
$Q$ is simple.
\end{enumerate}
\end{proposition}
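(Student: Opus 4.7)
The plan is to handle the four assertions in order, leveraging Lemma~\ref{factorization lemma} and Theorem~\ref{PIP supplement}.

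First I will verify the strongly $r$-Oka property: given $I+J,\,J^{-1}I \in \F$, I would use invertibility of $I+J$ and the inclusion $I \subseteq I+J$ to apply Lemma~\ref{factorization lemma}, yielding $I = (I+J) \cdot \bigl((I+J)^{-1}I\bigr)$. The crucial (and only non-routine) step will be to observe that $(I+J)^{-1}I = J^{-1}I$: an element $x$ lies in the left-hand side exactly when $Ix + Jx \subseteq I$, and $Ix \subseteq I$ is automatic since $I$ is a two-sided ideal. Thus $I$ is expressed as a product of two members of the monoidal family $\F$, so $I \in \F$.

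For the equivalence, I plan to take $\F$ to be the family of all $Q$-invertible ideals of $R$. This family is monoidal, since $(IJ)^{*} = J^{*}I^{*}$ witnesses invertibility of a product, and it contains $R$ (with $R^{*} = R$). The preceding paragraph together with Proposition~\ref{logical dependence} then render $\F$ Oka. Since every invertible ideal is finitely generated as a right ideal (as noted in the paragraph preceding Lemma~\ref{factorization lemma}), Theorem~\ref{PIP supplement}(1) applies to the semifilter $\F_{0}$ of nonzero ideals of $R$ and delivers the nontrivial direction: if every nonzero prime ideal is invertible, then so is every nonzero ideal. The reverse implication is trivial.

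For item~(1), I will show that whenever $A,B$ are nonzero ideals with $AB = 0$, invertibility of $A$ forces $B = RB = A^{*}AB = 0$, a contradiction. For item~(2), I would take a nonzero ideal $J$ of $Q$; by hypothesis $J \cap R$ is a nonzero ideal of $R$, hence $Q$-invertible. The product $(J \cap R)(J \cap R)^{*}$ equals $R$ on one hand and lies inside $J \cdot Q \subseteq J$ on the other, so $R \subseteq J$, forcing $J = Q$. The main obstacle in the whole proof is really just the identity $(I+J)^{-1}I = J^{-1}I$, the one place where the two-sided nature of $I$ is essential; everything else is routine manipulation of the invertibility relations once the Prime Ideal Principle machinery is in place.
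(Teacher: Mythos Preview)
Your proof is correct and follows essentially the same route as the paper: the key identity $(I+J)^{-1}I = J^{-1}I$ followed by Lemma~\ref{factorization lemma}, then Theorem~\ref{PIP supplement} with the finite-generation of invertible ideals. The only deviation is in item~(1): you argue directly that $AB=0$ with $A$ invertible forces $B = A^{*}AB = 0$, whereas the paper observes that the zero ideal is the unique member of $\F'$ and invokes the Prime Ideal Principle~\ref{PIP} to conclude it is prime---both arguments are immediate, yours being slightly more self-contained and the paper's being more in keeping with the surrounding theme.
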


\begin{proof}
Let $I,J \unlhd R$ such that $(I,J),J^{-1}I \in \F$. Noting that
$(I,J)^{-1}I = J^{-1}I$, we may apply Lemma~\ref{factorization lemma} to
find that $I = (I,J) \cdot (J^{-1}I) \in \F$.
The second statement follows from Theorem~\ref{PIP supplement} once we notice that
the family of \emph{all} $Q$-invertible ideals in $R$ is monoidal and every
invertible ideal is finitely generated (as a right ideal). 

Now assume that all nonzero ideals of $R$ are $Q$-invertible.
Then~(1) follows from the Prime Ideal Principle~\ref{PIP} because the zero ideal is 
maximal in the complement of the Oka family of all invertible ideals. 
Now assume that the hypothesis of~(2) holds. To see
that $Q$ is simple, let $K \unlhd Q$ be nonzero. Then $I = K \cap R$ is a nonzero 
ideal in $R$. Thus $I$ is invertible and consequently $1 \in R = II^* \subseteq KI^* \subseteq K$. 
So $K=Q$, proving that $Q$ is simple.
\end{proof}

Invertibility is typically studied in the context where $Q$ is a ring of quotients.
For instance, let $R$ be a right Ore ring (that is, a ring in which the set of 
non-zero-divisors is a right Ore set~\cite[\S10B]{Lectures}) and let $Q$
be its classical right ring of quotients. (In this case, every right ideal of $Q$ has
nonzero intersection with $R$, so the hypothesis of~(B) above is satisfied.)
If every nonzero prime ideal of $R$ is $Q$-invertible, the above implies that 
all nonzero ideals of $R$ are $Q$-invertible, that $R$ is prime, and that $Q$ is simple. 
When these conditions hold, $Q$ is (simple) artinian if and only if $R$ has finite right 
uniform dimension. Indeed, the simple ring $Q$ is equal to its own classical right ring 
of quotients. As it is prime and nonsingular, Goldie's theorem states that it is (right)
artinian exactly when $Q$ has finite right uniform dimension. But $Q$ has finite 
right uniform dimension if and only if $R$ does (see~\cite[(10.35)]{Lectures}).
(Of course, the ring of quotients $Q$ need not always be artinian if all ideals of $R$ are
$Q$-invertible. For instance, consider that a ring which is a union of simple artinian 
subrings is again simple and equal to its own classical ring of quotients, but may fail 
to be artinian. Specifically we may take $R = Q$ to be the direct limit of the matrix rings 
$R_n = \mathbb{M}_{2^{n}}(k)$ for some division ring $k$, with $R_n$ considered 
as a subring of $R_{n+1}$ via the natural map 
$R_n \rightarrow R_n \otimes I_2 \subseteq R_n \otimes_k \mathbb{M}_2(k) \cong R_{n+1}$.
An explicit argument that $Q$ is not artinian is given in~\cite[p.~40]{FC}.)

The remainder of the results in this section are based on families of ideals in a ring
$R$ that are formed by considering those ideals $I$ such that the $(R,R)$-bimodule
$R/I$ lies in a class $\C$ of bimodules, \emph{always assumed to contain the zero
module}, satisfying certain properties. We say that a
class $\C$ of $(R,R)$-bimodules is \emph{closed under extensions} if, for every
short exact sequence of bimodules $0 \to L \to M \to N \to 0$, if $L,N \in \C$ then
$N \in C$. (Taking $L = 0$, note that this implies that if $M \cong N$ and $N \in \C$
then also $M \in \C$.) We say that $\C$ is \emph{closed under quotients} if every
homomorphic image of a bimodule in $\C$ is also in $\C$.

The first application of this kind of reasoning gives a method to construct examples of
($P_1$) and $r$-Oka families. As one can tell from the hypotheses of~(2) and~(3) below,
it applies especially well to right noetherian rings. 
In the following, for a module $M$ we use the term \emph{direct sum power of $M$}
to mean a module that is a direct sum of copies of $M$, that is, a module
of the form $M^{(X)} = \bigoplus_X M$ for some set $X$.

\begin{proposition}
\label{right module categories}
Let $R$ be a ring and $\C$ a class of right $R$-modules containing the zero module, and 
denote $\F = \{I \unlhd R : R/I \in \C\}$.
\begin{enumerate}
\item If $\C$ is closed under extensions, quotients, and arbitrary
direct sum powers, then $\F$ satisfies $(P_1)$.
\item Suppose that every ideal in $\F$ is finitely generated as a right ideal and that $\C$
is closed under extensions and quotients. Then $\F$ satisfies $(P_1)$. 
\item Suppose that every principal (equivalently, each finitely generated) ideal in $R$ is 
finitely generated as a right ideal and that $\C$ is closed under extensions and quotients. 
Then $\F$ is an $r$-Oka semifilter.
\end{enumerate}
\end{proposition}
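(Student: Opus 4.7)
The plan is to treat all three parts with a unified template. In each case, I would prove the semifilter condition directly from closure of $\C$ under quotients, then establish monoidality (for (1) and (2)) or the $r$-Oka property (for (3)) by constructing a short exact sequence of right $R$-modules whose outer terms lie in $\C$ and whose middle term can be exhibited as a quotient of a (finite or arbitrary) direct sum of copies of $R/J$ for some $J \in \F$. Closure under extensions then completes the argument.

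For the semifilter condition, if $I \in \F$ and $J \supseteq I$, then $R/J$ is a quotient of $R/I \in \C$, so $R/J \in \C$ by quotient closure. For monoidality in (1) and (2), given $I, J \in \F$ I would use the sequence
\[
0 \to I/IJ \to R/IJ \to R/I \to 0.
\]
The middle term $I/IJ$ is right-annihilated by $J$, so it carries a right $R/J$-module structure, and any right $R$-module generating set $\{x_\alpha\}_{\alpha \in X}$ of $I/IJ$ yields a surjection $(R/J)^{(X)} \twoheadrightarrow I/IJ$. In (1), arbitrary direct sum powers are allowed, so no further hypothesis is needed. In (2), the assumption that every ideal in $\F$ is finitely generated as a right ideal forces $I$, and hence $I/IJ$, to require only finitely many generators; one then produces $(R/J)^n \in \C$ by iterating extension closure (applied to split short exact sequences). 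Quotient and extension closure finish both cases.

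For the $r$-Oka property in (3), I would assume $(I,a), (a)^{-1}I \in \F$ and consider
\[
0 \to (I,a)/I \to R/I \to R/(I,a) \to 0.
\]
A direct check using $(I,a) = I + (a)$ shows that $(a)^{-1}I$ annihilates $(I,a)/I$ on the right: for $x \in (a)^{-1}I$ and $z = i + w$ with $i \in I$, $w \in (a)$, we have $zx = ix + wx \in I + (a)x \subseteq I$. Thus $(I,a)/I$ is a right $R/(a)^{-1}I$-module, and $R/(a)^{-1}I \in \C$. Since $(I,a)/I \cong (a)/((a) \cap I)$ and the principal ideal $(a)$ is finitely generated as a right ideal by hypothesis, $(I,a)/I$ is finitely generated as a right $R$-module, so there is a surjection $(R/(a)^{-1}I)^n \twoheadrightarrow (I,a)/I$. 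The argument then concludes just as in (2): iterated extension closure gives $(R/(a)^{-1}I)^n \in \C$, quotient closure gives $(I,a)/I \in \C$, and a final use of extension closure gives $R/I \in \C$.

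The main obstacle is conceptual rather than computational: recognizing the correct two-sided ideal that annihilates the middle term of each short exact sequence ($J$ in the monoidal case, $(a)^{-1}I$ in the $r$-Oka case), and verifying that the finite-generation hypotheses in (2) and (3) are exactly what is needed to replace the arbitrary direct sum powers used in (1) by finite direct sums, so that only extension closure (rather than direct sum closure) suffices. The slightly subtle step is in (3), where the principal ideal $(a)$ is invoked to conclude that $(I,a)/I$ is finitely generated as a right module; this is where the hypothesis on principal ideals being right-finitely-generated is indispensable.
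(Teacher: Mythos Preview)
Your proof is correct and follows essentially the same approach as the paper's: the same short exact sequences, the same annihilator observations, and the same reduction to (finite or arbitrary) direct sum powers of a cyclic module in $\C$. One cosmetic note: $I/IJ$ is the kernel term, not the ``middle term,'' of your first sequence; otherwise your write-up is in fact slightly cleaner than the paper's, which prints $R/(a)^{-1}I$ rather than the correct $R/(I,a)$ in the quotient position of the sequence used for~(3).
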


\begin{proof}
In each case, $\F$ is a semifilter because $\C$ is closed under quotients, and $R \in \F$
because $0 \in \C$.
First suppose that $\C$ is as in~(1). Let $I,J \in F$ and consider the short exact 
sequence  of right $R$-modules 
\[
0 \to \frac{I}{IJ} \to \frac{R}{IJ} \to \frac{R}{I} \to 0.
\]
The right module $I/IJ$ is annihilated by $J$, so for some sufficiently large set $X$ we have a surjection of
$\bigoplus\nolimits_{X} R/J$ (which is in $\C$ by closure under direct
sum powers) onto $I/IJ$ (which we now see to be in $\C$ by closure under
quotients). Because $I/IJ$ and $R/I$ belong to $\C$, which is closed
under extensions, we have $R/IJ \in \C$ and thus $IJ \in \F$.

Next suppose that $\F$ and $\C$ are as in~(2), and fix $I,J \in \F$. The argument 
that $IJ \in \F$ is identical to that for~(1), taking into account the following modificiations.
Because of the hypothesis on $\F$, $I \in \F$ implies that $I/IJ$ is finitely generated 
as a right $R$-module. Thus the set $X$ above can be made finite, and because 
$\C$ is closed under extensions we obtain that  $(R/J)^X$ lies in $\C$.

Now under the hypothesis of~(3), let $I \unlhd R$ and $a \in R$ be such that
$(I,a),(a)^{-1}I \in \F$. The assumption on $R$ implies that $(I,a)/I$ is a 
finitely generated right module in the short exact sequence 
\[
0 \to \frac{(I,a)}{I} \to \frac{R}{I} \to \frac{R}{(a)^{-1}I} \to 0.
\]
Further, the right annihilator of $(I,a)/I$ is equal to $(a)^{-1}I$. So we have
a surjection of $R$-modules $(R/(a)^{-1}I)^n \twoheadrightarrow (I,a)/I$ for 
some positive integer~$n$, and the argument can proceed as above to conclude
that $I \in \F$. Hence $\F$ is an $r$-Oka family.
\end{proof}

\begin{example}
Given an $m$-system $S$ in a ring $R$, let $\C$ be the class of right $R$-modules
$M$ such that $\ann(M) \cap S \neq \varnothing$.
Then $\C$ is closed under extensions and quotients, because in any exact sequence 
$0 \to L \to M \to N \to 0$ of right $R$-modules, one has 
$\ann(N) \ann(L) \subseteq \ann(M) \subseteq \ann(N)$.  Also,
$\ann(\bigoplus_{X}M) = \ann(M)$ for any indexing set $X$. So $\C$ satisfies
the hypothesis of Proposition~\ref{right module categories}(1) and satisfies $(P_1)$. 
Because $\ann((R/I) _R) = I$ for any $I \unlhd R$, the family $\F = \{I \unlhd R : R/I \in \C\}$
is equal to the set of all ideals that have nonempty intersection with $S$. Thus we recover
the same family that we considered in Proposition~\ref{m-system}. 
\end{example}

\begin{example}
Let $R$ be a ring, and let $S \subseteq R$ be a multiplicative subset. Let $\C$
be the class of \emph{$S$-torsion modules}: those modules in which every element
is annihilated by an element of $S$. This class is easily seen to be closed under
extensions, quotients, and arbitrary direct sums. It follows from 
Proposition~\ref{right module categories}(1) that the corresponding family 
\[
\F = \{I \unlhd R : R/I \in \C\} = \{I \unlhd R : \mbox{for all } r \in R,\ r^{-1}I \cap S \neq \varnothing\}
\]
is $(P_1)$.
\end{example}

Two familiar facts in noncommutative ring theory state that (i)~a ring is right
artinian if and only if it is right noetherian and all of its prime factors are right
artinian, and (ii)~a left artinian ring is right artinian if and only if
it is right noetherian. Here we improve these criteria for a ring to be right
artinian by relaxing the right noetherian hypothesis in each case to the
mere assumption that the \emph{two-sided} ideals of the ring be finitely
generated as right ideals.

\begin{proposition}
\label{when R is right artinian}
If every principal ideal of a ring $R$ is finitely generated as a right ideal, 
then the family $\F = \{I \unlhd R : R/I \text{ is right artinian}\}$ is $r$-Oka.
A ring $R$ is right artinian if and only if $R/P$ is right artinian for all $P \in \Spec(R)$
and every ideal in $R$ is finitely generated as a right ideal. A left artinian ring is right 
artinian if and only if all of its ideals are finitely generated as right ideals.
\end{proposition}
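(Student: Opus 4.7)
My plan is to reduce each of the three assertions to Proposition~\ref{right module categories}(3) and Theorem~\ref{PIP supplement}, with the only serious ring-theoretic input appearing in the last assertion. For the first claim I take $\C$ to be the class of right artinian right $R$-modules and verify the hypotheses of Proposition~\ref{right module categories}(3): the class $\C$ contains the zero module, is closed under quotients (a descending chain in a quotient lifts to one in the original module), and is closed under extensions (in $0 \to L \to M \to N \to 0$ with $L, N$ right artinian, any descending chain in $M$ stabilizes after intersecting with $L$ and after projecting to $N$, hence in $M$). Since every principal ideal of $R$ is assumed to be finitely generated as a right ideal, that proposition yields that $\F$ is $r$-Oka.

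For the second assertion, the forward direction follows from the Hopkins--Levitzki theorem (right artinian implies right noetherian, so every two-sided ideal is finitely generated as a right ideal) together with the obvious fact that right artinianness is inherited by quotients. For the reverse direction I would apply Theorem~\ref{PIP supplement}. The hypothesis entails that every principal ideal is finitely generated as a right ideal, so $\F$ is $r$-Oka (hence Oka) by the first assertion. Moreover, since every ideal of $R$ is finitely generated as a right ideal, it is \emph{a fortiori} finitely generated as a two-sided ideal, so every ideal in $\F$ is finitely generated and the chain hypothesis of Theorem~\ref{PIP supplement} is met. Every prime of $R$ lies in $\F$ by hypothesis, so Theorem~\ref{PIP supplement}(3) forces every ideal of $R$ to lie in $\F$; taking $I = 0$ yields that $R$ itself is right artinian.

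For the third assertion the forward direction is Hopkins--Levitzki once more. For the reverse direction, in view of the second assertion it suffices to check that $R/P$ is right artinian for every $P \in \Spec(R)$. Here I would invoke the key structural fact that a left artinian prime ring is simple artinian: since $R$ is left artinian, $R/P$ is left artinian as well, and the Jacobson radical $J(R/P)$ is nilpotent; but a nonzero nilpotent ideal cannot exist in a prime ring, so $J(R/P) = 0$. Hence $R/P$ is semisimple by Wedderburn--Artin, that is, a finite direct product of simple artinian rings, and primeness collapses this product to a single factor. Simple artinian rings are right artinian, completing the reduction.

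The only conceptual obstacle is the simple-artinian reduction in the third assertion; everything else is a direct application of machinery already developed in the paper (Proposition~\ref{right module categories}(3) and Theorem~\ref{PIP supplement}) together with short verifications of well-known module-theoretic closure properties.
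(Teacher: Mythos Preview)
Your proof is correct and follows essentially the same approach as the paper: apply Proposition~\ref{right module categories}(3) to the class of artinian right $R$-modules, then use Theorem~\ref{PIP supplement} together with Hopkins--Levitzki and the fact that a prime left artinian ring is semisimple (hence right artinian). The paper's proof is identical in structure, merely omitting the explicit verifications of the closure properties of $\C$ and the details behind ``left artinian and prime $\implies$ semisimple'' that you spell out.
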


\begin{proof}
The fact that $\F$ is $r$-Oka follows from Proposition~\ref{right module categories}(3)
applied to the class $\C$ of artinian right $R$-modules.

If $R$ is right artinian, then certainly all of its prime factors are right
artinian. Also $R$ is right noetherian, so its ideals are all finitely generated as 
right ideals. For the converse, suppose that all of the prime factors of $R$ are right
artinian and that its ideals are finitely generated as right ideals. By the first part of this
proposition, the family $\F$ defined above is strongly $r$-Oka. Because
$\Spec(R) \subseteq \F$, the finiteness assumption on the ideals of $R$
allows us to conclude that all ideals lie in $\F$, according to Theorem~\ref{PIP supplement}.
In particular we have $0 \in \F$, and $R$ is right artinian.

For the last statement, given a left artinian ring $R$ whose ideals are
finitely generated as right ideals, we merely need to show that $R/P$ is right artinian for all
$P \in \Spec(R)$. For each prime ideal $P \unlhd R$ the factor ring $R/P$ is
left artinian and prime, hence semisimple and right artinian.
\end{proof}

An even more general criterion for a left artinian ring to be right artinian is 
the result of D.\,V.~Huynh~\cite[Theorem 1]{Huynh} that a left artinian ring
is right artinian if and only if $J/(J^{2}+D)$ is right finitely generated where $J \unlhd R$
is the Jacobson radical of $R$ and $D \unlhd R$ is the maximal divisible torsion
ideal. 

A similar consideration gives a criterion for a ring to be right noetherian.
We do not include the proof, which is identical to the proposition above.
Notice that we do not have a corresponding result telling when a left
noetherian ring is right noetherian due to the existence of prime left
noetherian rings which are not right noetherian, such as a twisted polynomial
extension $k[x; \sigma]$  of a field $k$ by an endomorphism $\sigma \colon k \to k$
that is not surjective.

\begin{proposition}
\label{when R is right noetherian}
If every principal ideal of a ring $R$ is finitely generated as a right ideal, then the
family $\F = \{I \unlhd R : R/I \text{ is right noetherian}\}$ is
$r$-Oka. A ring $R$ is right noetherian if and only if $R/P$ is right noetherian for
all $P \in \Spec(R)$ and every ideal in $R$ is finitely generated as a right ideal.
\end{proposition}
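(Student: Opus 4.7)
The plan is to mirror the proof of Proposition~\ref{when R is right artinian} verbatim, substituting the class $\C$ of noetherian right $R$-modules for the class of artinian right $R$-modules. The key preliminary observations are that $\C$ contains the zero module and is closed under quotients and extensions: any quotient of a noetherian module is noetherian, and in any short exact sequence $0 \to L \to M \to N \to 0$ of right $R$-modules, $M$ is noetherian whenever both $L$ and $N$ are. Combined with the hypothesis that every principal ideal of $R$ is finitely generated as a right ideal, Proposition~\ref{right module categories}(3) immediately yields that the corresponding family $\F$ is an $r$-Oka semifilter, hence in particular an Oka family in the sense of Definition~\ref{def:Oka}.

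For the forward direction of the biconditional, if $R$ is right noetherian then each prime factor $R/P$ inherits this property, and every two-sided ideal of $R$, being a right submodule of the noetherian module $R_R$, is finitely generated as a right ideal.

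For the converse, I would apply Theorem~\ref{PIP supplement}(3) to the Oka family $\F$. The hypothesis that $R/P$ is right noetherian for every $P \in \Spec(R)$ is exactly the condition $\Spec(R) \subseteq \F$. The remaining ingredient needed from Theorem~\ref{PIP supplement} is the chain condition on $\F'$, and this follows from the hypothesis that every ideal of $R$ is finitely generated as a right ideal (and hence \emph{a fortiori} as a two-sided ideal) by the standard argument recorded after the statement of Theorem~\ref{PIP supplement}: the union of a chain $\{I_\alpha\}$ in $\F'$, if it lay in $\F$, would be generated as an ideal by finitely many elements, all of which already sit in some $I_\beta$ of the chain, forcing $I_\beta = \bigcup I_\alpha \in \F$ and contradicting $I_\beta \in \F'$. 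Applying Theorem~\ref{PIP supplement}(3) then yields $0 \in \F$, i.e., $R$ is right noetherian.

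I do not expect any substantive obstacle, because the proof is a direct transplant of the argument for Proposition~\ref{when R is right artinian}; the only conceptual point worth flagging is the passage from the hypothesis on finite generation as right ideals to finite generation as two-sided ideals, which is exactly what drives the chain-condition step required by Theorem~\ref{PIP supplement}.
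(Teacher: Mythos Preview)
Your proposal is correct and follows the same route as the paper, which explicitly omits the proof as ``identical to the proposition above'' (Proposition~\ref{when R is right artinian}): apply Proposition~\ref{right module categories}(3) to the class of noetherian right modules, then invoke Theorem~\ref{PIP supplement} using the finite-generation hypothesis to handle the chain condition. Your write-up in fact spells out a couple of steps (the $r$-Oka $\Rightarrow$ Oka implication and the Zorn argument for the chain condition) that the paper leaves implicit.
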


The previous two results raise the question of whether the assumption on the
ideals of $R$ can be further weakened. The following example shows that we
cannot hope to do much better than requiring every ideal to be right
finitely genrated. Consider the ring $R = \left(
\begin{smallmatrix}
\mathbb{R} & \mathbb{R} \\ 
0 & \mathbb{Q}
\end{smallmatrix}
\right)$. One can show (using, for example,~\cite[(1.17)]{FC}) that the
only ideals in $R$ are 
\[
0,\ I =
\begin{pmatrix}
0 & \mathbb{R} \\ 
0 & 0
\end{pmatrix}
,\ P_{1} =
\begin{pmatrix}
\mathbb{R} & \mathbb{R} \\ 
0 & 0
\end{pmatrix}
,\ P_{2} =
\begin{pmatrix}
0 & \mathbb{R} \\ 
0 & \mathbb{Q}
\end{pmatrix}
,\ \text{and }R.
\]
In particular we see that $R$ has finitely many ideals and thus satisfies
both ACC and DCC on ideals. In addition, $R$ is not prime since $I^{2}=0$,
and $\Spec(R) = \{P_1,P_2\}$ with $R/P_1 \cong \mathbb{Q}$ and
$R/P_2 \cong \mathbb{R}$. Notice that $I$ is the smallest nonzero
ideal of $R$ and that $R/I\cong \mathbb{R}\times \mathbb{Q}$
is semisimple, hence right artinian. Let $\F_{a}$ (resp.\ $\F_{n}$) 
denote the family of all ideals whose factor rings are right artinian
(resp. right noetherian). Then we see that every nonzero ideal belongs
to both $\F_a$ and $\F_{n}$, but the zero ideal is not prime. Then by
the Prime Ideal Principle neither $\F_{a}$ nor $\F_{n}$ can be an
Oka family. Additionally $R$ is left artinian but not right artinian or
right noetherian (see \cite[(1.22)]{FC}), even though it has
finitely many ideals and all of its prime quotients are right noetherian.

Following Artin, Small, and Zhang~\cite{ArtinSmallZhang}, we say that
an algebra $R$ over a commutative noetherian ring $C$ is \emph{strongly right
noetherian} if, for every commutative noetherian ring $D$ containing $C$
as a subring, the $D$-algebra $R \otimes_C D$ is right noetherian. (An
example of Resco and Small~\cite{RescoSmall} shows that a noetherian
algebra over a field need not be strongly noetherian. On the other hand, Bell has
shown~\cite{Bell} that this condition is automatically satisfied for a countably generated
right noetherian algebra over an uncountable algebraically closed field.) 
These algebras play an important role in noncommutative algebraic geometry,
especially in the theory of point modules~\cite{ArtinZhang}.

Similar to the above examples, the strong right noetherian property provides
another $r$-Oka family.

\begin{proposition}
Let $C$ be a commutative noetherian ring and let $R$ be a $C$-algebra.
If every principal ideal of $R$ is finitely generated as a right ideal, then 
the semifilter 
\[
\F = \{I \unlhd R : R/I \text{ is a strongly right noetherian $C$-algebra}\}
\]
is $r$-Oka. If $R$ right noetherian, then it is strongly right noetherian over $C$
if and only if $R/P$ is strongly right noetherian for all (minimal) primes $P \in \Spec(R)$.
\end{proposition}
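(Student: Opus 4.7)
The plan is to mirror the proofs of Propositions~\ref{when R is right artinian} and~\ref{when R is right noetherian} by applying Proposition~\ref{right module categories}(3) to a suitable class of right $R$-modules. Specifically, I would let $\C$ be the class of those right $R$-modules $M$ such that, for every commutative noetherian ring $D$ containing $C$ as a subring, the base-extended module $M \otimes_C D$ is noetherian as a right $(R \otimes_C D)$-module. The zero module lies in $\C$ vacuously. The key identification is that for any ideal $I \unlhd R$, the submodule lattice of $(R/I) \otimes_C D$ as a right $(R \otimes_C D)$-module coincides with the right ideal lattice of the ring $(R/I) \otimes_C D$, so $R/I \in \C$ if and only if $R/I$ is a strongly right noetherian $C$-algebra. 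Hence the family in the proposition is exactly $\F = \{I \unlhd R : R/I \in \C\}$.

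Next I would verify that $\C$ is closed under quotients and under extensions. Closure under quotients is immediate from right-exactness of $-\otimes_C D$: a surjection $M \twoheadrightarrow N$ yields a surjection $M \otimes_C D \twoheadrightarrow N \otimes_C D$, and quotients of noetherian modules are noetherian. For closure under extensions, given $0 \to L \to M \to N \to 0$ with $L, N \in \C$, I would take $L'$ to be the image of $L \otimes_C D$ in $M \otimes_C D$: then $L'$ is a quotient of the noetherian module $L \otimes_C D$, while $(M \otimes_C D)/L' \cong N \otimes_C D$ is noetherian, so $M \otimes_C D$ is an extension of noetherian by noetherian and hence noetherian. With these closure properties in place, Proposition~\ref{right module categories}(3) (whose hypothesis on principal ideals is exactly what we have assumed) shows that $\F$ is an $r$-Oka semifilter.

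For the second assertion, assume $R$ is right noetherian. The forward implication holds for arbitrary quotients: if $R$ is strongly right noetherian and $P$ is any prime, then $(R/P) \otimes_C D$ is a ring quotient of the right noetherian ring $R \otimes_C D$, hence right noetherian, so $R/P$ is strongly right noetherian. Conversely, suppose $R/P \in \F$ for every minimal prime $P$. Every prime $Q$ of $R$ contains some minimal prime $P$, and the ring surjection $R/P \twoheadrightarrow R/Q$ together with closure of $\C$ under quotients forces $R/Q \in \F$; thus $\Spec(R) \subseteq \F$. Since $R$ is right noetherian, every ideal of $R$ is finitely generated as a right ideal, which in particular guarantees the chain condition in the hypothesis of Theorem~\ref{PIP supplement}. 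Applying that theorem (recalling that $r$-Oka implies Oka by Proposition~\ref{logical dependence}), I conclude that every ideal of $R$ lies in $\F$; in particular $0 \in \F$, which is to say $R$ is strongly right noetherian.

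The only subtle point in the argument is the closure of $\C$ under extensions, where one must pass to the image $L'$ of $L \otimes_C D$ in $M \otimes_C D$ because $-\otimes_C D$ need not preserve injections. Once this standard maneuver is made, everything else follows routinely from the already-developed machinery of Section~\ref{Oka family section}.
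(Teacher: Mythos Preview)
Your proof is correct and follows essentially the same route as the paper: both introduce the class $\C$ of right $R$-modules whose base extensions to every noetherian $D \supseteq C$ are noetherian, verify closure under quotients and extensions via right-exactness of $-\otimes_C D$ (your image $L'$ is precisely the paper's kernel $K$), and invoke Proposition~\ref{right module categories}(3). In fact you spell out the second assertion more fully than the paper does, correctly passing from minimal primes to all primes via the semifilter property and then applying Theorem~\ref{PIP supplement}.
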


\begin{proof}
Let $\C$ be the class of all right $R$-modules $M$ such that, for any commutative
noetherian overring $D \supseteq C$, the right $R \otimes_C D$-module $M \otimes_C D$
is noetherian. (For brevity, we write $M^D = M \otimes_C D$ below.)
The statement will directly follow from Proposition~\ref{right module categories}
if we show that $\C$ is closed under extensions and quotients.

Fix a commutative noetherian overring $D \supseteq C$. 
Given a short exact sequence $0 \to L \to M \to N \to 0$ of $R$-modules where
$L,N \in \C$, right exactness of the tensor product gives an exact sequence
$L^D \to M^D \to N^D \to 0$ of $R^D$-modules. Letting $K \subseteq M^D$ denote
the kernel of $M^D \to N^D$, we obtain a short exact sequence of $R^D$ modules:
\[
0 \to K \to M^D \to N^D \to 0.
\]
But $K$ Is a homomorphic image of the noetherian $R^D$-module $L^D$. So in the
above, $K$ and $N^D$ are noetherian, implying that $M^D$ is noetherian. A similar
argument using right exactness shows that $\C$ is also closed under quotient modules.
\end{proof}

All of the examples of Oka families encountered up to this point actually
satisfied stronger properties than the Oka condition. The next proposition
will allow us to construct some Oka families without relying on any of the
stronger conditions.

\begin{proposition}
\label{Tensor category}
Let $k$ be a commutative ring, and let $\C$ be a class of $k$-modules containing 
the zero module that is closed under quotients, extensions, and tensor products. 
Then for any $k$-algebra $R$, the family $\F = \{ I\unlhd R:R/I\in \C \}$ is an Oka
semifilter.
\end{proposition}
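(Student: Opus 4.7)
The plan is to verify the semifilter and Oka conditions directly. The semifilter property is immediate: $R \in \F$ since $0 \in \C$, and closure of $\C$ under quotients ensures that whenever $I \in \F$ and $I \subseteq J$, the quotient $R/J$ is a homomorphic image of $R/I \in \C$, so $J \in \F$. For the Oka property, I would take $I \unlhd R$ and $a \in R$ with $(I,a), (a)^{-1}I, I(a)^{-1} \in \F$, and use the short exact sequence of $k$-modules
\[
0 \to (I,a)/I \to R/I \to R/(I,a) \to 0
\]
together with closure under extensions to reduce the problem to showing $(I,a)/I \cong (a)/(I \cap (a))$ lies in $\C$.

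The heart of the argument is the $k$-bilinear map
\[
\phi \colon R \otimes_k R \to (a), \quad r \otimes s \mapsto sar,
\]
which is surjective onto $(a) = RaR$. The crucial point is the asymmetric placement of $a$ between $s$ and $r$: if $r \in (a)^{-1}I$ then $ar \in I$ forces $\phi(r \otimes s) = s(ar) \in I$, while if $s \in I(a)^{-1}$ then $sa \in I$ forces $\phi(r \otimes s) = (sa)r \in I$. Composing $\phi$ with the projection $(a) \twoheadrightarrow (a)/(I \cap (a))$ therefore annihilates $(a)^{-1}I \otimes_k R + R \otimes_k I(a)^{-1}$ and so induces a surjection
\[
R/(a)^{-1}I \otimes_k R/I(a)^{-1} \twoheadrightarrow (a)/(I \cap (a)).
\]
Since the source lies in $\C$ by closure under tensor products, closure under quotients then places the target in $\C$, delivering $I \in \F$.

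The main subtlety is identifying the correct form of $\phi$: had I instead taken $r \otimes s \mapsto ras$, neither hypothesis would force $ras \in I$, because in a noncommutative ring one cannot in general slide $a$ past $r$ or $s$. The ordering $sar$ is precisely what couples both ideal quotients $(a)^{-1}I$ and $I(a)^{-1}$ to the Oka hypothesis simultaneously, and it clarifies why this construction yields an Oka family rather than an $r$- or $\ell$-Oka one.
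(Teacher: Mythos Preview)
Your proof is correct and follows essentially the same approach as the paper's: the same short exact sequence, the same reduction to showing $(I,a)/I \in \C$, and the same device of exhibiting $(I,a)/I$ as a quotient of a tensor product of $R/(a)^{-1}I$ and $R/I(a)^{-1}$.

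One minor correction to your closing commentary: the paper in fact uses the map $(r + I(a)^{-1},\ s + (a)^{-1}I) \mapsto ras + I$, and it works perfectly well. The point is not that $sar$ succeeds where $ras$ fails, but that the two ideal quotients must be paired with the tensor factors in the correct order. With the paper's labeling (first factor $R/I(a)^{-1}$, second factor $R/(a)^{-1}I$), the formula $ras$ is well-defined for exactly the reasons you gave for $sar$ under your labeling; the two constructions are interchanged by the symmetry $A \otimes_k B \cong B \otimes_k A$. So your observation about ``the asymmetric placement of $a$'' is the right insight, but it does not single out $sar$ over $ras$.
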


\begin{proof}
The family $\F$ is a semifilter because $\C$ is closed
under quotients. To see that $\F$ is Oka, let $I\unlhd R$ and $a\in R$
be such that $(I,a), (a)^{-1} I, I(a)^{-1} \in \F$. Consider the following 
short exact sequence of $(R,R)$-bimodules:
\[
0\rightarrow \frac{(I,a)}{I} \rightarrow \frac{R}{I}\rightarrow 
\frac{R}{(I,a)}\rightarrow 0.
\]
We want to conclude that $R/I\in \C$. Because $R/(I,a)
\in \C$ and $\C$ is closed under extensions, it suffices to show that
$(I,a)/I \in \C$. Consider the $k$-bilinear map
$R/I(a)^{-1} \times R/(a)^{-1} I\rightarrow (I,a)/I$
given by 
\[
(r+I(a)^{-1},\ s+(a)^{-1}I) \mapsto ras+I.
\]
This induces a $k$-module homomorphism $R/I(a)^{-1} \otimes_{k} R/(a)^{-1} I
\rightarrow (I,a)/I$ that is
evidently surjective. The fact that $\C$ is closed under tensor
products and quotients then implies that $(I,a)/I \in \C$ as desired.
\end{proof}

Part (2) of the following proposition gives a method of finding categories
$\C$ satisfying the hypothesis of Proposition \ref{Tensor category}.

\begin{lemma}
\label{creating tensor categories}
Let $k$ be a ring and let $\C$ be a class of $k$-modules containing the zero module.
\begin{enumerate}
\item If $\C$ is closed under quotients and extensions, then it is closed 
under finite sums of modules in the following sense: if $N_1,\ldots, N_m\in \C$ 
are submodules of a single $k$-module $M$, then $\sum N_i \in \C$ also. 
\item Assume that $k$ is commutative. If $\C$ is closed under
quotients and extensions and if $M,N\in \C$ with $N$ finitely generated, 
then $M\otimes _{k}N\in \C$. In particular, if every module in $\C$
is finitely generated, then $\C$ is closed under tensor products.
\end{enumerate}
\end{lemma}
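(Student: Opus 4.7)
The plan is to handle (1) by a direct reduction to the closure properties already given, and then to deduce (2) by cutting $M \otimes_k N$ into finitely many submodules that are each a quotient of $M$, so that part~(1) applies.

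For (1), I would first observe that closure under extensions (together with $0 \in \C$) forces $\C$ to be closed under finite external direct sums: given $L, N \in \C$, the split exact sequence $0 \to L \to L \oplus N \to N \to 0$ shows $L \oplus N \in \C$, and one iterates to get $\bigoplus_{i=1}^m N_i \in \C$. The canonical summation map
\[
\bigoplus_{i=1}^{m} N_i \twoheadrightarrow \sum_{i=1}^{m} N_i \subseteq M
\]
then exhibits $\sum_i N_i$ as a homomorphic image of an object of $\C$, and closure under quotients yields $\sum_i N_i \in \C$.

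For (2), choose generators $n_1, \ldots, n_r$ of $N$. Commutativity of $k$ makes each map $\phi_i \colon M \to M \otimes_k N$, $m \mapsto m \otimes n_i$, into a $k$-linear map; its image
\[
T_i = \{m \otimes n_i : m \in M\} \subseteq M \otimes_k N
\]
is a $k$-submodule and a homomorphic image of $M \in \C$, so $T_i \in \C$ by closure under quotients. For any pure tensor $m \otimes n$ with $n = \sum_j r_j n_j$, commutativity of $k$ gives $m \otimes n = \sum_j (r_j m) \otimes n_j \in \sum_j T_j$, so $M \otimes_k N = T_1 + \cdots + T_r$. Applying part (1) to the submodules $T_i$ inside $M \otimes_k N$ then yields $M \otimes_k N \in \C$. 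The ``in particular'' clause is immediate: if every module of $\C$ is finitely generated, then the finite-generation hypothesis on $N$ is automatic.

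The only genuine pitfall I anticipate is conceptual rather than technical: one is tempted to identify $T_i$ with $M \otimes_k k n_i$, but because the inclusion $kn_i \hookrightarrow N$ need not split, the natural map $M \otimes_k k n_i \to M \otimes_k N$ need not be injective. It is therefore crucial to define $T_i$ as the image of $\phi_i$ and to recognize it directly as a quotient of $M$, so that no flatness is invoked and the only closure properties used are those hypothesized.
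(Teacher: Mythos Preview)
Your proof is correct and for part~(2) is identical to the paper's. For part~(1) the paper takes a slightly different but equally elementary route: rather than passing through the external direct sum $\bigoplus N_i$ and then quotienting, it works directly with the exact sequence $0 \to N_1 \to N_1 + N_2 \to (N_1+N_2)/N_1 \cong N_2/(N_1\cap N_2) \to 0$ and inducts on the number of summands; both arguments use only the stated closure hypotheses and neither is materially shorter or more general than the other.
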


\begin{proof}
(1) It suffices to prove the statement for a sum of two submodules,
for then an easy inductive argument extends the result to arbitrary finite sums. So
suppose $N_1, N_2$ are submodules of the $k$-module $M$. Then we have the
following short exact sequence:
\[
0\rightarrow N_1 \rightarrow N_1 + N_2 \rightarrow (N_1 + N_2)/N_1 \rightarrow 0.
\]
But $(N_1 + N_2)/N_1 \cong N_2/(N_1 \cap N_2) \in \C$ because $\C$ is closed under
quotients. Thus the short exact sequence above gives $N_1 + N_2 \in \C$ as desired.

(2) Let $x_{1}, \dots, x_{n}\in N$ form a generating set. Each of the submodules
$M\otimes x_{i}\subseteq M\otimes N$ is a homomorphic image of $M$ (under the
map $- \otimes x_i \colon M \to M \otimes x_i$) and thus
lies in $\C$. But then $M\otimes N=\sum_{i=1}^{n}M\otimes x_{i}\in\C$.
\end{proof}

Now we take a look at some specific examples.

\begin{example}
Let $R$ be an algebra over a commutative ring $k$. The family $\F$ of all ideals $I$ of $R$
such that $R/I$ is module-finite over $k$ is an Oka semifilter (using 
Proposition~\ref{Tensor category} applied to the class $\C$ of finitely generated $k$-modules). 
Thus, an ideal $I \unlhd R$ maximal with respect to having $R/I$ not module-finite over 
$k$ is prime.
\end{example}

The result above is related to the study of ``just infinite'' algebras. An algebra $R$
over a commutative ring $k$ is is said to be \emph{just infinite} if $R$ is not finitely 
generated as a $k$-module, but every proper factor of $R$ is module-finite over~$k$.  
A result attributed to Lance Small (see \cite[Exercise~6.2.5]{Rowen}, \cite[Prop.~3.2]{PassmanTemple},
or~\cite[Lem.~2.1]{FarkasSmall}) states that in case $k$ is a field, an infinite dimensional
$k$-algebra has a just infinite factor ring (so that the family above has the Zorn
property) which is prime. 

The ``maximal implies prime'' statement above recovers part of Small's result. (For
an alternative proof, see~\cite{FarinaPendergrassRice}.) To recover the stronger result
using the Zorn property, we require the following Artin-Tate-style lemma.
We follow the proof strategy of~\cite[Lemma~3.1]{PassmanTemple}. 
Recall that a $k$-algebra is \emph{affine} if it is finitely generated as a $k$-algebra.

\begin{lemma}
Let $k$ be a commutative noetherian ring, and let $R$ be an affine $k$-algebra. 
If $M$ is a finitely generated right $R$-module with a submodule $L \subseteq M$
such that $M/L$ is finitely generated as a $k$-module, then $L$ is also finitely 
generated as an $R$-module.
(In particular, if $I$ is a right ideal of $R$ such that $R/I$ finitely generated over~$k$, then 
$I$ is finitely generated as a right ideal.)
\end{lemma}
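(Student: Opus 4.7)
The plan is to construct a finitely generated right $R$-submodule $L_0 \subseteq L$ such that $M/L_0$ is finitely generated as a $k$-module. Once this is done, the noetherian hypothesis on $k$ will force the $k$-submodule $L/L_0 \subseteq M/L_0$ to be finitely generated over $k$, hence finitely generated over $R$ (since $k$ maps to the center of $R$), and combining $R$-generators of $L_0$ with lifts of $k$-generators of $L/L_0$ will produce a finite $R$-generating set for $L$.

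To build $L_0$, fix $k$-algebra generators $r_1,\dots,r_s$ of $R$, right $R$-module generators $m_1,\dots,m_n$ of $M$, and elements $y_1,\dots,y_t \in M$ whose images generate $M/L$ as a $k$-module. For each $i$ write $m_i = \sum_l c_{il} y_l + \lambda_i$ with $c_{il}\in k$ and $\lambda_i \in L$, and for each pair $(l,j)$ write $y_l r_j = \sum_{l'} d_{ljl'} y_{l'} + \mu_{lj}$ with $d_{ljl'}\in k$ and $\mu_{lj}\in L$. Let $L_0$ be the right $R$-submodule of $L$ generated by the finite set $\{\lambda_i\}\cup\{\mu_{lj}\}$, so $L_0$ is finitely generated as a right $R$-module by construction.

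Next I would verify that the images of $y_1,\dots,y_t$ generate $M/L_0$ as a $k$-module. Since $R$ is affine over $k$, any element of $M$ is a $k$-linear combination of expressions of the form $m_i r_{j_1}r_{j_2}\cdots r_{j_N}$ for $N\ge 0$. Replacing $m_i$ by $\sum_l c_{il}y_l$ modulo $L_0$ and then iterating the relation $y_l r_j \equiv \sum_{l'}d_{ljl'}y_{l'} \pmod{L_0}$, a straightforward induction on $N$ reduces each such monomial to a $k$-linear combination of the $y_l$ modulo $L_0$. Thus $M = \sum_l k\, y_l + L_0$, so $M/L_0$ is finitely generated over $k$.

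With this in place, the conclusion is immediate: $k$ noetherian and $M/L_0$ finitely generated over $k$ imply that $L/L_0$ is a finitely generated $k$-module, hence a finitely generated $R$-module, whence $L$ itself is finitely generated over $R$. The main point requiring care is the inductive reduction in the third step; this is the reason for including both the ``constant remainders'' $\lambda_i$ (to handle the base case, eliminating the $m_i$ in favor of the $y_l$) and the ``one-step multiplication remainders'' $\mu_{lj}$ (to let the induction on word length close up inside $\sum_l k\,y_l + L_0$). The parenthetical assertion about right ideals $I\unlhd R$ follows by specializing to $M = R$ and $L = I$, since $R/I$ finitely generated over $k$ is exactly the hypothesis of the lemma in that case.
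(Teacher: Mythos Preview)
Your proof is correct and follows essentially the same approach as the paper: the same finitely generated $R$-submodule $L_0 \subseteq L$ is built from the ``constant remainders'' and ``one-step multiplication remainders,'' and the noetherian hypothesis on $k$ is used in the same way to finish. The only cosmetic difference is that the paper packages your induction on word length into the single observation that $L_0 + \sum_l k\,y_l$ is an $R$-submodule of $M$ containing the $m_i$ (hence equals $M$), and then intersects with $L$ rather than passing to $M/L_0$; the content is identical.
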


\begin{proof}
Let $\{x_1,\dots,x_m\}$ be $k$-algebra generating set of $R$ and let $\{g_1, \dots, g_n\}$
be a set of $R$-module generators of $M$. Because $M/L$ is finitely generated over $k$, there 
exist $v_1,\dots,v_t \in M$ such that, for $V = \sum kv_i$, we have $M = L + V$. 

Consequently there exist elements $p_\alpha \in L$ and $q_{\beta \gamma} \in L$ such that 
each $g_\alpha \in p_\alpha + V$ and each $v_\beta x_\gamma \in q_{\beta \gamma} + V$. 
Let $L_0 \subseteq L$ be the $R$-submodule generated by the finite set of the $p_\alpha$ 
and $q_{\beta \gamma}$. It is straightforward to see that $L_0 + V$ is an
$R$-submodule of $M$. By construction, each $g_\alpha \in L_0 + V$, so $M = L_0 + V$.

Finally, if $l \in L$ then $l = l_0 + v$ for $l_0 \in L_0 \subseteq L$ and $v \in V$. But also
$v = l - l_0 \in L$. It follows that $L = L_0 + (L \cap V)$. Because $k$ is noetherian and $V$
is finitely generated over $k$, the $k$-submodule $L \cap V$ is also finitely generated 
over $k$. Thus $L = L_0 + (L \cap V)$ is finitely generated as an $R$-module.
\end{proof}

\begin{theorem}
Let $R$ be an affine algebra over a commutative noetherian ring $k$. The family
\[
\F = \{I \unlhd R : R/I \mbox{ is finitely generated as a $k$-module}\}
\]
satisfies $(P_1)$. Thus any just infinite $k$-algebra is prime. Furthermore, an algebra $R$ 
that is not finitely generated as a $k$-module is just infinite over $k$ if and only if $R/P$ is 
module-finite over~$k$ for all nonzero $P \in \Spec(R)$.
\end{theorem}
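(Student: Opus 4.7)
My plan is to verify the three assertions in turn, with the bulk of the work going into establishing that $\F$ satisfies $(P_1)$. Both the semifilter and monoidal properties reduce to manipulating a standard short exact sequence, but the monoidal step crucially uses the Artin--Tate-style lemma that immediately precedes the theorem.

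First, the semifilter property is immediate: if $I \in \F$ and $J \supseteq I$, then $R/J$ is a quotient of $R/I$ as a $k$-module, so $R/J$ is finitely generated over $k$. For the monoidal property, given $I, J \in \F$, I would consider the short exact sequence of $(R,R)$-bimodules
\[
0 \to I/IJ \to R/IJ \to R/I \to 0,
\]
which is automatically a short exact sequence of $k$-modules. Since $R/I$ lies in $\F$ by assumption, it suffices to show that $I/IJ$ is module-finite over $k$. The preceding lemma, applied to the right ideal $I$, gives that $I$ is finitely generated as a right $R$-module; fix right $R$-module generators $x_1, \dots, x_n$ of $I$. Then the images $\overline{x_i}$ generate $I/IJ$ as a right $R$-module, and since $x_i R \cdot J \subseteq IJ$, the module $I/IJ$ is annihilated on the right by $J$, hence is a finitely generated right $R/J$-module. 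Because $R/J \in \F$ is module-finite over $k$, it follows that $I/IJ$ is module-finite over $k$, so $R/IJ \in \F$.

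The ``just infinite implies prime'' statement is then an immediate application of the Prime Ideal Principle: if $R$ is just infinite, then $0 \in \F'$ while every nonzero ideal lies in $\F$, so $0 \in \Max(\F')$ and hence $0$ is prime by Theorem~\ref{PIP}. For the final equivalence, one direction is tautological. For the converse, assume $R$ is not module-finite and that $R/P \in \F$ for every nonzero prime $P$. The same lemma shows that every ideal in $\F$ is finitely generated as a right ideal, hence as a two-sided ideal, so every nonempty chain in $\F'$ has an upper bound in $\F'$ by Zorn's lemma. Applying Theorem~\ref{PIP supplement}(2) with $J = 0$ now yields that every nonzero ideal of $R$ lies in $\F$, which is exactly the assertion that $R$ is just infinite.

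The main obstacle is the monoidal step: one must recognize that the ``obvious'' surjection argument requires a finiteness input not built into the class $\C$ of module-finite $k$-modules (since arbitrary direct sum powers of module-finite $k$-modules are not module-finite, so Proposition~\ref{right module categories}(1) does not directly apply). This is where the Artin--Tate-style lemma is indispensable, providing the needed finite generation of $I$ as a right $R$-module so that only a finite sum of copies of $R/J$ is required. Everything else then proceeds smoothly by invoking results already established in the paper.
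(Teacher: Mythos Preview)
Your proof is correct and follows essentially the same approach as the paper: the paper invokes Proposition~\ref{right module categories}(2) after observing via the Artin--Tate lemma that every ideal in $\F$ is finitely generated as a right ideal, while you simply unwind that proposition's argument directly (same short exact sequence, same use of $I/IJ$ being a finitely generated $R/J$-module). The applications of Theorems~\ref{PIP} and~\ref{PIP supplement} for the remaining claims are identical.
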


\begin{proof}
The previous lemma implies that every ideal in $\F$ is finitely generated as a right
ideal. Applying Proposition~\ref{right module categories}(2) to the class $\C$ of right
$R$-modules that are finitely generated over $k$, we conclude that $\F$ is a $(P_1)$
family. The final two claims respectively follow from the Prime Ideal Principle~\ref{PIP}
and Theorem~\ref{PIP supplement}.
\end{proof}

We present two more examples of Oka families derived from Proposition~\ref{Tensor category}.
In both examples, $k$ denotes a commutative ring.

\begin{example}
The class of noetherian $k$-modules is closed under quotients and
extensions, and consists of finitely generated modules. Then as above, the family of all
$I\unlhd R$ with $R/I$ noetherian over $k$ is an Oka semifilter.
\end{example}

\begin{example}
The class of $k$-modules of finite (composition) length, which is the
same as the category of finitely generated artinian $k$-modules, is closed under quotients
and extensions and consists of finitely generated modules. So the family of $I\unlhd R$ such
that $R/I$ is artinian over $k$ is an Oka semifilter.
\end{example}

We conclude with some thoughts on future directions in the study of the ``maximal implies
prime'' phenomenon for two-sided ideals in noncommutative rings. When comparing the
theories of Oka families for commutative rings~\cite{LR} and for right ideals in noncommuative
rings~\cite{Reyes} with the theory developed above, we notice one striking difference.
In both of the former two settings, it was shown that Oka families in a ring $R$ are in bijective 
correspondence with classes of cyclic $R$-modules that are \emph{closed under extensions} 
(see~\cite[\S 4]{LR} and~\cite[\S 4]{Reyes}).
This allows one to take any module-theoretic property that is preserved by extensions and produce
a corresponding Oka family, a method which led to a number of novel examples of ``maximal
implies prime'' results in both cases. 

By contrast, we have not found any (bi)module-theoretic characterization of the Oka families
of Definition~\ref{def:Oka} leading to a correspondence between Oka families in a ring $R$ 
and certain classes of (bi)modules. While Propositions~\ref{right module categories} 
and~\ref{Tensor category} provide sufficient conditions for a class of bimodules to give rise
to an Oka family, both of these methods seem severely limited by the fact that they can
only produce Oka \emph{semifilters} of ideals. (Note that each of those results involved
a short exact sequence of $(R,R)$-bimodules the form
\[
0 \to \frac{(I,a)}{I} \to \frac{R}{I} \to \frac{R}{(I,a)},
\]
where $I$ is an ideal and $a$ is an element of the ring $R$. In each case, the hypotheses 
are imposed with the intent of finding a description of $(I,a)/I$ in terms of the bimodules
$R/I(a)^{-1}$ and $R/(a)^{-1}I$. The key noncommutative difficulty seems to be the
lack of such a description in full generality.)

Thus, in conclusion, we offer two general problems to stimulate further work in this direction.
\begin{enumerate}
\item Is it possible to characterize Oka families of ideals in a noncommutative ring $R$ in terms of
suitable classes of $(R,R)$-bimodules (perhaps using a more appropriate definition than the one
given in Definition~\ref{def:Oka})?
\item In lieu of such a characterization, is there a related method that yields more examples of Oka 
families that are not semifilters and that do not satisfy conditions $(P_1)$--$(P_3)$? 
\end{enumerate}
Even partial progress on these problems seems likely to lead to new ``maximal implies prime'' results
for ideals in noncommutative rings.

\bibliographystyle{amsplain}
\bibliography{twosidedpip-arxiv-v3}

\providecommand{\bysame}{\leavevmode\hbox to3em{\hrulefill}\thinspace}
\providecommand{\MR}{\relax\ifhmode\unskip\space\fi MR }
% \MRhref is called by the amsart/book/proc definition of \MR.
\providecommand{\MRhref}[2]{%
  \href{http://www.ams.org/mathscinet-getitem?mr=#1}{#2}
}
\providecommand{\href}[2]{#2}
\begin{thebibliography}{10}

\bibitem{AmitsurSmall}
S.~A. Amitsur and Lance~W. Small, \emph{Algebras over infinite fields,
  revisited}, Israel J. Math. \textbf{96} (1996), no.~part A, 23--25.
  \MR{1432724 (97k:16025)}

\bibitem{Anderson}
D.~D. Anderson, \emph{A note on minimal prime ideals}, Proc. Amer. Math. Soc.
  \textbf{122} (1994), no.~1, 13--14. \MR{1191864 (94k:13001)}

\bibitem{ArtinSmallZhang}
M.~Artin, L.~W. Small, and J.~J. Zhang, \emph{Generic flatness for strongly
  {N}oetherian algebras}, J. Algebra \textbf{221} (1999), no.~2, 579--610.
  \MR{1728399 (2001a:16006)}

\bibitem{ArtinZhang}
M.~Artin and J.~J. Zhang, \emph{Abstract {H}ilbert schemes}, Algebr. Represent.
  Theory \textbf{4} (2001), no.~4, 305--394. \MR{1863391 (2002h:16046)}

\bibitem{Bell}
Jason~P. Bell, \emph{Noetherian algebras over algebraically closed fields}, J.
  Algebra \textbf{310} (2007), no.~1, 148--155. \MR{2307786 (2008f:16050)}

\bibitem{Cohen}
I.~S. Cohen, \emph{Commutative rings with restricted minimum condition}, Duke
  Math. J. \textbf{17} (1950), 27--42. \MR{0033276 (11,413g)}

\bibitem{Eisenbud}
David Eisenbud, \emph{Commutative {A}lgebra: with a {V}iew {T}oward {A}lgebraic
  {G}eometry}, Graduate Texts in Mathematics, vol. 150, Springer-Verlag, New
  York, 1995. \MR{1322960 (97a:13001)}

\bibitem{FarinaPendergrassRice}
John Farina and Cayley Pendergrass-Rice, \emph{A few properties of just
  infinite algebras}, Comm. Algebra \textbf{35} (2007), no.~5, 1703--1707.
  \MR{2317639 (2008b:16030)}

\bibitem{FarkasSmall}
Daniel~R. Farkas and Lance~W. Small, \emph{Algebras which are nearly finite
  dimensional and their identities}, Israel J. Math. \textbf{127} (2002),
  245--251. \MR{1900701 (2003c:16033)}

\bibitem{GoodearlWarfield}
K.~R. Goodearl and R.~B. Warfield, Jr., \emph{An introduction to noncommutative
  {N}oetherian rings}, second ed., London Mathematical Society Student Texts,
  vol.~61, Cambridge University Press, Cambridge, 2004. \MR{2080008
  (2005b:16001)}

\bibitem{Herstein}
I.~N. Herstein, \emph{Due risultati classici sugli anelli}, Univ. e Politec.
  Torino Rend. Sem. Mat. \textbf{21} (1961/1962), 99--102. \MR{0148689 (26
  \#6196)}

\bibitem{Huynh}
Dinh~Van Huynh, \emph{A note on rings with chain conditions}, Acta Math.
  Hungar. \textbf{51} (1988), no.~1-2, 65--70. \MR{934584 (89e:16024)}

\bibitem{Kaplansky49}
Irving Kaplansky, \emph{Elementary divisors and modules}, Trans. Amer. Math.
  Soc. \textbf{66} (1949), 464--491. \MR{0031470 (11,155b)}

\bibitem{Kaplansky74}
\bysame, \emph{Commutative {R}ings}, revised ed., The University of Chicago
  Press, Chicago, Ill.-London, 1974. \MR{0345945 (49 \#10674)}

\bibitem{Krause}
G{\"u}nter Krause, \emph{Middle annihilators in {N}oetherian rings}, Comm.
  Algebra \textbf{8} (1980), no.~8, 781--791. \MR{566421 (81h:16025)}

\bibitem{Lectures}
T.~Y. Lam, \emph{Lectures on {M}odules and {R}ings}, Graduate Texts in
  Mathematics, vol. 189, Springer-Verlag, New York, 1999. \MR{1653294
  (99i:16001)}

\bibitem{FC}
\bysame, \emph{A {F}irst {C}ourse in {N}oncommutative {R}ings}, second ed.,
  Graduate Texts in Mathematics, vol. 131, Springer-Verlag, New York, 2001.
  \MR{1838439 (2002c:16001)}

\bibitem{ExercisesClassical}
\bysame, \emph{Exercises in {C}lassical {R}ing {T}heory}, second ed., Problem
  Books in Mathematics, Springer-Verlag, New York, 2003. \MR{2003255
  (2004g:16001)}

\bibitem{LR}
T.~Y. Lam and Manuel~L. Reyes, \emph{A {P}rime {I}deal {P}rinciple in
  commutative algebra}, J. Algebra \textbf{319} (2008), no.~7, 3006--3027.
  \MR{2397420}

\bibitem{LR2}
\bysame, \emph{Oka and {A}ko ideal families in commutative rings}, Rings,
  modules and representations, Contemp. Math., vol. 480, Amer. Math. Soc.,
  Providence, RI, 2009, pp.~263--288. \MR{2508156 (2010j:13008)}

\bibitem{McConnellRobson}
J.~C. McConnell and J.~C. Robson, \emph{Noncommutative {N}oetherian {R}ings},
  revised ed., Graduate Studies in Mathematics, vol.~30, American Mathematical
  Society, Providence, RI, 2001, With the cooperation of L. W. Small.
  \MR{1811901 (2001i:16039)}

\bibitem{McCoy}
Neal~H. McCoy, \emph{Prime ideals in general rings}, Amer. J. Math. \textbf{71}
  (1949), 823--833. \MR{0032590 (11,311c)}

\bibitem{PassmanTemple}
D.~S. Passman and W.~V. Temple, \emph{Representations of the {G}upta-{S}idki
  group}, Proc. Amer. Math. Soc. \textbf{124} (1996), no.~5, 1403--1410.
  \MR{1307556 (96g:20009)}

\bibitem{Procesi}
C.~Procesi, \emph{The {B}urnside problem}, J. Algebra \textbf{4} (1966),
  421--425. \MR{0212081 (35 \#2956)}

\bibitem{RescoSmall}
Richard Resco and L.~W. Small, \emph{Affine {N}oetherian algebras and
  extensions of the base field}, Bull. London Math. Soc. \textbf{25} (1993),
  no.~6, 549--552. \MR{1245080 (94m:16029)}

\bibitem{Reyes}
Manuel~L. Reyes, \emph{A one-sided prime ideal principle for noncommutative
  rings}, J. Algebra Appl. \textbf{9} (2010), no.~6, 877--919. \MR{2748128
  (2012a:16007)}

\bibitem{Reyes2}
\bysame, \emph{Noncommutative generalizations of theorems of {C}ohen and
  {K}aplansky}, Algebr. Represent. Theory \textbf{15} (2012), no.~5, 933--975.
  \MR{2969284}

\bibitem{Rowen}
Louis~H. Rowen, \emph{Ring {T}heory, {S}tudent {E}dition}, Academic Press Inc.,
  Boston, MA, 1991. \MR{1095047 (94e:16001)}

\bibitem{Sirola}
Boris {\v{S}}irola, \emph{On noncommutative {N}oetherian schemes}, J. Algebra
  \textbf{282} (2004), no.~2, 667--698. \MR{2101080 (2006b:16050)}

\end{thebibliography}
\end{document}